\theoremstyle{plain}
\newtheorem{thm}{Theorem}
\newtheorem{cor}{Corollary}
\newtheorem{prop}{Proposition}
\theoremstyle{definition}
\newtheorem{dfn}{Definition}
\theoremstyle{remark}
\newtheorem*{rem}{Remark}
\newtheorem*{opp}{Open Problems}
\title{Remarks on the notion of homo-derivations}
\author{Eszter Gselmann and Gergely Kiss}
\begin{document}

\maketitle

\begin{abstract}
 The purpose of this paper is to study the (different) notions of homo-derivations. These are additive 
 mappings $f$ of a ring $R$ that also fulfill the identity 
 \[
  f(xy)=f(x)y+xf(y)+f(x)f(y)
  \qquad 
  \left(x, y\in R\right), 
 \]
 or (in case of the other notion) the system of equations 
 \[
  \begin{array}{rcl}
   f(xy)&=&f(x)f(y)\\
   f(xy)&=&f(x)y+xf(y)
  \end{array}
  \qquad 
  \left(x, y\in R\right). 
 \]
Our primary aim is to investigate the above equations without additivity as well as 
the following Pexiderized equation
\[
 f(xy)=h(x)h(y)+xk(y)+k(x)y. 
\]
The obtained results show that under rather mild assumptions homo-derivations can be fully characterized, even without the additivity assumption.  
\end{abstract}

\begin{center}
 \emph{\small{Dedicated to the $70$\textsuperscript{th} birthday of Professor Antal Járai. }}
\end{center}

\section{Introduction and preliminaries}

The main aim of this paper is to present some characterization theorems concerning homomorphisms, derivations and also homo-derivations. 
Thus, at first, we list some notions and preliminary results that will be used in the sequel.
All of these statements and definitions can be found in Kuczma \cite{Kuc09}\index{Kuczma, M.} and
in Zariski--Samuel\index{Zariski, O.}\index{Samuel, P.} \cite{ZarSam75} and also in Kharchenko\index{Kharchenko, V. L.} \cite{Kha91}.

\subsection*{Homomorphisms and derivations}

\begin{dfn}
 Let $P$ and $Q$ be (not necessarily unital) rings. A function $f\colon P\to Q$ is called a 
 \emph{homomorphism} (of $P$ into $Q$) if it is additive, i.e. 
 \[
f(x+y)=f(x)+f(y)
\quad
\left(x, y\in P\right)
\]
and also 
\[
 f(xy)=f(x)f(y) 
 \qquad 
 \left(x, y\in P\right)
\]
holds. 

If moreover, $f$ is one-to-one, then $f$ is called a \emph{monomorphism}. 
If $f$ is onto, then $f$ is called an \emph{epimorphism}. 
A homomorphism which is a monomorphism and an epimorphism is called an \emph{isomorphism}. 
In case $P=Q$, the function $f$ is termed to be an \emph{endomorphism}. 
\end{dfn}

\begin{dfn}
Let $Q$ be a (not necessarily unital) ring and let $P$ be a subring of $Q$.
A function $f\colon P\rightarrow Q$ is called a \emph{derivation} if it is additive 
and also satisfies the so-called \emph{Leibniz rule}, i.e.  equation
\[
f(xy)=f(x)y+xf(y)
\quad
\left(x, y\in P\right). 
\]
\end{dfn}

\begin{rem}
 Let $Q$ be a ring and let $P$ be a subring of $Q$.
Functions $f\colon P\rightarrow Q$ fulfilling the Leibniz rule only, will be termed \emph{Leibniz functions}. 
\end{rem}

Among derivations one can single out so-called inner derivations, similarly as in the case of automorphisms. 

\begin{dfn}
 Let $R$ be a ring and $b\in R$, then the mapping $\mathrm{ad}_{b}\colon R\to R$ defined by 
 \[
  \mathrm{ad}_{b}(x)=\left[x, b\right]=xb-bx
  \qquad 
  \left(x\in R\right)
 \]
is a derivation. A derivation $f\colon R\to R$ is termed to be an \emph{inner derivation} if there is a 
$b\in R$ so that $f=\mathrm{ad}_{b}$. We say that a derivation is an \emph{outer derivation} if it is not inner. 
\end{dfn}

An another fundamental example for derivations is the following.

\begin{rem}
Let $\mathbb{F}$ be a field, and let in the above definition $P=Q=\mathbb{F}[x]$
be the ring of polynomials with coefficients from $\mathbb{F}$. For a polynomial
$p\in\mathbb{F}[x]$, $p(x)=\sum_{k=0}^{n}a_{k}x^{k}$, define the function
$f\colon \mathbb{F}[x]\rightarrow\mathbb{F}[x]$ as
\[
f(p)=p',
\]
where $p'(x)=\sum_{k=1}^{n}ka_{k}x^{k-1}$ is the derivative of the polynomial $p$.
Then the function $f$ clearly fulfills
\[
\begin{array}{rcl}
f(p+q)&=&f(p)+f(q)\\
f(pq)&=&pf(q)+qf(p)
\end{array}
\]
for all $p, q\in\mathbb{F}[x]$. Hence $f$ is a derivation.
\end{rem}

Clearly, commutative rings admit only \emph{trivial} inner derivations. At the same time, it 
not so evident whether commutative rings (or fields) \emph{do} or \emph{do not} admit nontrivial 
outer derivations. To answer this problem partially, here we recall Theorem 14.2.1 from Kuczma \cite{Kuc09}.

\begin{thm}\label{T14.2.1}
Let $\mathbb{K}$ be a field of characteristic zero, let $\mathbb{F}$ be a subfield of $\mathbb{K}$, 
let $S$ be an algebraic base of $\mathbb{K}$ over $\mathbb{F}$,
if it exists, and let $S=\emptyset$ otherwise.
Let $f\colon \mathbb{F}\to \mathbb{K}$ be a derivation.
Then, for every function $u\colon S\to \mathbb{K}$,
there exists a unique derivation $g\colon \mathbb{K}\to \mathbb{K}$
such that $g \vert_{\mathbb{F}}=f$ and $g \vert_{S}=u$.
\end{thm}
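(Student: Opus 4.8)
The plan is to construct $g$ by climbing the tower $\mathbb{F}\subseteq L\subseteq\mathbb{K}$, where $L=\mathbb{F}(S)$ is the purely transcendental subextension generated by the algebraic base (i.e.\ transcendence base) $S$, over which $\mathbb{K}$ is by assumption algebraic. The transcendental step is where all the freedom lives: the prescribed values $u$ on $S$ can be chosen arbitrarily. The algebraic step, by contrast, is rigid, and it is precisely the characteristic-zero hypothesis, through separability, that makes this rigidity yield both existence and uniqueness above $L$.

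First I would extend $f$ to $L$. On the polynomial ring $\mathbb{F}[S]$ I define $g$ so that it acts as $f$ on the coefficients and as $u$ on the generators, the two being combined through the Leibniz rule; concretely $g(a s^{\alpha})=f(a)s^{\alpha}+a\sum_{i}\alpha_{i}u(s_{i})s^{\alpha-e_{i}}$, extended additively over monomials. The first thing to verify is well-definedness, which rests on $S$ being algebraically independent over $\mathbb{F}$, so that every element of $\mathbb{F}[S]$ has a unique expansion in monomials; granting this, additivity and the Leibniz identity on $\mathbb{F}[S]$ are routine. The map then extends uniquely to the field of fractions $L=\mathbb{F}(S)$ through the forced quotient rule $g(p/q)=\bigl(g(p)q-p\,g(q)\bigr)/q^{2}$, and one checks that this is independent of the representative $p/q$. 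When $S=\emptyset$ this stage is vacuous and $L=\mathbb{F}$.

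The heart of the argument is the algebraic step, which I would organize by Zorn's lemma. Consider the poset of pairs $(M,g_{M})$, where $L\subseteq M\subseteq\mathbb{K}$ is an intermediate field and $g_{M}\colon M\to\mathbb{K}$ is a derivation extending $g|_{L}$, ordered by extension; chains have upper bounds given by unions, so a maximal element $(M_{0},g_{0})$ exists. Suppose $M_{0}\neq\mathbb{K}$ and pick $\theta\in\mathbb{K}\setminus M_{0}$ with minimal polynomial $m(t)=\sum c_{i}t^{i}$ over $M_{0}$. Differentiating $m(\theta)=0$ shows that any derivation extending $g_{0}$ must satisfy $m^{g_{0}}(\theta)+m'(\theta)g_{0}(\theta)=0$, where $m^{g_{0}}$ applies $g_{0}$ to the coefficients of $m$; since characteristic zero forces $m$ to be separable we have $m'(\theta)\neq 0$, so $g_{0}(\theta)=-m^{g_{0}}(\theta)/m'(\theta)$ is simultaneously uniquely determined and admissible. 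Realizing $M_{0}(\theta)\cong M_{0}[t]/(m)$, I would then check that this assignment defines a genuine derivation on the quotient, contradicting maximality and forcing $M_{0}=\mathbb{K}$. The main obstacle, and the point deserving the most care, is exactly this last verification: one must confirm that the forced value $g_{0}(\theta)$ propagates to a well-defined derivation on all of $M_{0}(\theta)$ — equivalently, that the induced map sends the ideal $(m)$ into itself — rather than merely being consistent with the single relation $m(\theta)=0$. Uniqueness of the whole construction then follows automatically, since the values on $\mathbb{F}$, on $S$, and on each successively adjoined algebraic element are determined at every stage.
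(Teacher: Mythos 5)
Your argument is correct and complete in outline; note that the paper itself gives no proof of this statement, merely recalling it as Theorem 14.2.1 of Kuczma's book, and your two-stage construction (free extension over the transcendence base $S$ to $\mathbb{F}(S)$ via the Leibniz and quotient rules, then the rigid algebraic step via Zorn's lemma, where characteristic zero gives $m'(\theta)\neq 0$ and forces $g_{0}(\theta)=-m^{g_{0}}(\theta)/m'(\theta)$) is exactly the classical proof found in that reference and in Zariski--Samuel. The one verification you flag as delicate in fact follows directly from the single relation: writing $p=mr$ for any $p$ vanishing at $\theta$, the identities $p^{g_{0}}=m^{g_{0}}r+mr^{g_{0}}$ and $p'=m'r+mr'$ give $p^{g_{0}}(\theta)+p'(\theta)g_{0}(\theta)=r(\theta)\bigl(m^{g_{0}}(\theta)+m'(\theta)g_{0}(\theta)\bigr)=0$, so well-definedness on all of $M_{0}(\theta)$ is automatic.
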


In \cite{Sof00}, El Sofy introduced the notion of homo-derivations. 
After that several results appeared where the authors proved commutativity results 
for the domain of these mappings, see e.~g. \cite{AlhMut19a, AlhMut19, MahAlaNaj19, RehAlRadMut19}. 
It is objectionable however that there have not been made attempt to characterize or to compare these notions. 
One of the main purpose of this work is to clarify these problems.

\begin{dfn}[El Sofy \cite{Sof00}]
Let $Q$ be a ring and let $P$ be a subring of $Q$.
A function $f\colon P\rightarrow Q$ is called a \emph{homo-derivation} if it is additive 
and also satisfies the equation
\[
f(xy)=f(x)y+xf(y)+f(x)f(y)
\quad
\left(x, y\in P\right). 
\]
\end{dfn}

We remark that there can be found some other ways of introducing homo-derivations.  
Here we present a further definition as it appears in \cite{MehPaj03}.

\begin{dfn}[Mehdi Ebrahimi--Pajoohesh \cite{MehPaj03}]
Let $Q$ be a ring and let $P$ be a subring of $Q$.
A function $f\colon P\rightarrow Q$ is called a \emph{homo-derivation} if it is a homomorphism  
and also satisfies the Leibniz rule. 
\end{dfn}

\subsection*{Polynomials and the Levi-Civit\`{a} equation}

As we will see in the second section, the notion of exponential polynomials and the 
so-called Levi-Civit\`{a} functional equation will play a distinguished role while 
proving our results.

In view of Theorem 10.1 of \cite{Sze91}, if $(G, \cdot)$ is an Abelian group, then
any function $f\colon G\to \mathbb{C}$ satisfying the so-called
\emph{Levi-Civit\`{a} functional equation}, that is,
\begin{equation}\label{Eq_levi}
 f(x\cdot y)= \sum_{i=1}^{n}g_{i}(x)h_{i}(y)
 \qquad
 \left(x, y\in G\right)
\end{equation}
for some positive integer $n$ and functions 
$g_{i}, h_{i}\colon G\to\mathbb{C}\; (i=1, \ldots, n)$, is an exponential polynomial of
order at most $n$. 

At the same time, in equation \eqref{Eq_levi} not only the function $f$, but also the 
mappings $g_{i}, h_{i}\colon G\to\mathbb{C}\; (i=1, \ldots, n)$ are assumed to be unknown.

If either the functions $g_{1}, \ldots, g_{n}$ or 
the functions $h_{1}, \ldots, h_{n}$ are \emph{linearly dependent}, 
then the number $n$ appearing in equation \eqref{Eq_levi} can be reduced 
and in this case the general solution of the equation can contain arbitrary functions, 
we shall call this case \emph{degenerate}.

Alternatively, if the functions $h_{1}, \ldots, h_{n}$ are \emph{linearly independent}, 
then $g_{1}, \ldots, g_{n}$ are linear combinations of 
the translates of $f$, hence they are exponential  polynomials of
order at most $n$, too. Moreover, they are built up from the same
additive and exponential functions as the function $f$. Roughly speaking this is 
Theorem 10.4 of \cite{Sze91} which is the following.

\begin{thm}\label{Szekely}
 Let $G$ be an Abelian group, $n$ be a positive integer and $f, g_{i}, h_{i}\colon G\to \mathbb{C}\; (i=1, \ldots, n)$ be functions so that both the sets
 $\left\{g_{1}, \ldots, g_{n}\right\}$ and $\left\{h_{1}, \ldots, h_{n}\right\}$ are linearly independent.
 The functions  $f, g_{i}, h_{i}\colon G\to \mathbb{C}\; (i=1, \ldots, n)$ form a \emph{non-degenerate} solution of equation \eqref{Eq_levi}
 if and only if
 \begin{enumerate}[(a)]
  \item there exist positive integers $k, n_{1}, \ldots, n_{k}$ with $n_{1}+\cdots+n_{k}=n$;
  \item there exist different nonzero complex exponentials $m_{1}, \ldots, m_{k}$;
  \item for all $j=1, \ldots, k$ there exists linearly independent sets of complex additive functions \[\left\{a_{j, 1}, \ldots,  a_{j, n_{j}-1}\right\};\]
  \item there exist polynomials $P_{j}, Q_{i, j}, R_{i, j}\colon \mathbb{C}^{n_{j}-1}\to \mathbb{C}$ for all $i=1, \ldots, n; j=1, \ldots, k$ in
  $n_{j}-1$ complex variables and of degree at most $n_{j}-1$;
 \end{enumerate}
 so that we have
 \[
  f(x)= \sum_{j=1}^{k}P_{j}\left(a_{j, 1}(x), \ldots, a_{j, n_{j}-1}(x)\right)m_{j}(x)
 \]
 \[
  g_{i}(x)= \sum_{j=1}^{k}Q_{i, j}\left(a_{j, 1}(x), \ldots, a_{j, n_{j}-1}(x)\right)m_{j}(x)
 \]
 and
 \[
  h_{i}(x)= \sum_{j=1}^{k}R_{i, j}\left(a_{j, 1}(x), \ldots, a_{j, n_{j}-1}(x)\right)m_{j}(x)
 \]
for all $x\in G$ and  $i=1, \ldots, n$. 
\end{thm}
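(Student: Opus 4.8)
The plan is to treat the two implications separately, with the real work concentrated in the necessity direction. For \emph{sufficiency} I would simply substitute the proposed forms of $f, g_i, h_i$ into \eqref{Eq_levi} and expand. Writing $m_j(xy)=m_j(x)m_j(y)$ for the exponentials and $a_{j,l}(xy)=a_{j,l}(x)+a_{j,l}(y)$ for the additive functions, the multinomial expansion of each polynomial $P_j\left(a_{j,1}(xy),\ldots\right)$ breaks into a finite sum of products of a function of $x$ and a function of $y$; collecting terms exhibits $f(xy)$ in the Levi-Civit\`{a} form and shows the order does not exceed $n=n_1+\cdots+n_k$. One also checks that with the stated data the sets $\{g_i\}$ and $\{h_i\}$ can be taken linearly independent, so that the solution produced is genuinely non-degenerate.

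For \emph{necessity}, the first step is to recognize that the equation forces all translates of $f$ into a fixed finite-dimensional space. Setting $f_y(x)=f(xy)$, equation \eqref{Eq_levi} reads $f_y=\sum_{i=1}^{n}h_i(y)g_i$, so every translate of $f$ lies in $V:=\operatorname{span}\{g_1,\ldots,g_n\}$, which has dimension $n$ since the $g_i$ are independent. Conversely, as $\{h_1,\ldots,h_n\}$ is linearly independent I can choose $y_1,\ldots,y_n\in G$ with $\det[h_i(y_j)]\neq 0$; inverting this matrix expresses each $g_i$ as a linear combination of the translates $x\mapsto f(xy_j)$. Hence $V$ coincides with the variety generated by $f$ (the smallest translation-invariant subspace containing $f$); in particular $V$ is translation invariant and $n$-dimensional. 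Running the same argument in the other variable, now using the independence of $\{g_i\}$, shows that the $h_i$ are likewise linear combinations of translates of $f$, so $f, g_i, h_i$ are all built from the same additive and exponential functions.

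It then remains to read off the explicit structure of $V$. By Theorem 10.1 of \cite{Sze91} (quoted above) $f$ is an exponential polynomial of order at most $n$, and the structure theory of finite-dimensional translation-invariant spaces lets me decompose $V=\bigoplus_{j=1}^{k}V_j$ into components indexed by the distinct nonzero exponentials $m_1,\ldots,m_k$ that occur, with $\dim V_j=n_j$ and $n_1+\cdots+n_k=n$. On the component $V_j$ every element has the form (polynomial in additive functions)$\cdot\, m_j$; choosing a linearly independent set $\{a_{j,1},\ldots,a_{j,n_j-1}\}$ of additive functions adapted to $V_j$ and writing the polynomials $P_j, Q_{i,j}, R_{i,j}$ in these variables yields the asserted representations of $f, g_i, h_i$.

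The main obstacle is precisely this last step: controlling the local structure of $V_j$ so that exactly $n_j-1$ additive functions and polynomials of degree at most $n_j-1$ suffice. This amounts to analyzing $V_j$ as a module over the relevant local algebra (the localization of the group algebra at the maximal ideal determined by $m_j$) and matching the dimension $n_j$ of the component to the number of additive generators together with the admissible polynomial degree. The bookkeeping that both the degree bound and the number of additive functions equal $n_j-1$ is the delicate part; the rest reduces to linear algebra and the separation-of-variables computation already carried out for sufficiency.
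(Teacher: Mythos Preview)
The paper does not prove this theorem at all: it is quoted verbatim as Theorem~10.4 of Sz\'{e}kelyhidi~\cite{Sze91} and used as a black box in the subsequent analysis of equation~\eqref{Eq_gen}. There is therefore nothing in the present paper to compare your argument against.

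That said, your outline is the standard route and is essentially what one finds in Sz\'{e}kelyhidi's monograph: observe that every translate of $f$ lies in $\operatorname{span}\{g_1,\ldots,g_n\}$, use the linear independence of the $h_i$ to invert and conclude that this span is exactly the (translation-invariant) variety $\tau(f)$ of $f$, deduce symmetrically that the $g_i$ and $h_i$ lie in $\tau(f)$ as well, and then invoke the structure theory of finite-dimensional varieties to write everything as an exponential polynomial. Your identification of the delicate point is accurate: the content of the structure theorem is precisely that an $n_j$-dimensional indecomposable variety attached to a single exponential $m_j$ is spanned by $m_j$ times polynomials of degree at most $n_j-1$ in at most $n_j-1$ independent additive functions. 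You have not actually supplied that argument here, only gestured at it via ``analyzing $V_j$ as a module over the relevant local algebra''; a complete proof would need either to carry this out or to cite the relevant lemmas (e.g.\ the spectral analysis/synthesis results in Chapters~3--5 of~\cite{Sze91}). One further small gap in your sufficiency paragraph: simply exhibiting $f(xy)$ in Levi-Civit\`a form with $n$ terms does not by itself show the solution is non-degenerate; you still need to argue that the particular $g_i,h_i$ given by the stated formulas are linearly independent, which imposes nontrivial compatibility conditions on the polynomials $Q_{i,j},R_{i,j}$ that the statement as written leaves implicit.
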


Let $G$ be a groupoid  and $\mathbb{F}$ be field. 
Given $(A_{i, j})_{i, j}\in \mathscr{M}_{n\times n}(\mathbb{F})$ and $(\Gamma^{(k)}_{i, j})_{i, j} \in \mathscr{M}_{n\times n}(\mathbb{F})$, 
in \cite{McK77} McKiernan studied the following problems. 
\begin{enumerate}[(1)]
 \item Find all functions $h,f_{i}, g_{i}\colon G\to \mathbb{F}$ ($i= 1, \ldots, n$)  satisfying the equation 
 \begin{equation}\label{Eq_McKiernan1}
 h(xy)=\sum_{i, j=1}^{n} A_{i, j} f_{i}(x)g_{j}(y)
 \qquad 
 \left(x, y\in G\right). 
 \end{equation}
  \item Find all functions $g_{i}\colon G\to \mathbb{F}$ ($i=1, \ldots, n$) 
 satisfying the system of equations 
 \begin{equation}\label{Eq_McKiernan2}
 g_{k}(xy)=\sum_{i, j=1}^{n}\Gamma_{i, j}^{(k)}g_{i}(x)g_{j}(y)
 \qquad 
 \left(
x, y\in G, k=1, \ldots, n 
 \right)
\end{equation}
\end{enumerate}

The solutions are obtained by showing that the two problems are essentially equivalent, 
then transforming to a matrix problem and applying one of his earlier results concerning a multiplicative matrix equation. 
All in all, the main result of \cite{McK77} is that if 
\begin{enumerate}[(a)]
 \item $G$ is a (not necessarily commutative) semigroup, 
 \item $\mathbb{F}$ is an algebraically closed field with $\mathrm{char}(F)\geq (n-1)!$, 
 \item $\det \left(A_{i, j}\right)\neq 0$, 
 \item $f_{1}, \ldots, f_{n}$ are linearly independent, 
 \item $g_{1}, \ldots, g_{n}$ are linearly independent, 
\end{enumerate}
then the solutions of equation \eqref{Eq_McKiernan1} as well as \eqref{Eq_McKiernan2} are exponential polynomials.

In what follows \cite[Lemma 4.2]{Sze91}, that is, the statement below will be utilized several times. 

\begin{thm}
 Let $G$ be an Abelian group, $\mathbb{K}$ a field, 
 $X$ a $\mathbb{K}$-linear space and $\mathscr{V}$ a translation invariant linear space 
 of $X$-valued functions on $G$. 
 Let $k_{i}$ be nonnegative integers, $n\geq 1$, 
 $m_{i}\colon G\to \mathbb{K}$ different nonzero exponentials, 
 $A_{i}\colon G^{n_{i}}\to X$  symmetric, $k_{i}$-additive functions and 
 $q_{i}\colon G\to X$ polynomials of degree at most $k_{i}-1$ ($i=1, \ldots, n$). 
 If the function 
 \[
 \sum_{i=1}^{n}\left(\mathrm{diag}(A_{i})+q_{i}\right)m_{i}
 \]
 belongs to 
 $\mathscr{V}$, then there exist polynomials $r_{i}\colon G\to X$ of degree at most 
 $k_{i}-1$ such that $\left(\mathrm{diag}(A_{i})+r_{i}\right)m_{i}$ belongs to $\mathscr{V}$ for 
 $i=1, \ldots, n$. 
\end{thm}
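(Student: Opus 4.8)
The plan is to work entirely with the translation operators and to separate the exponentials by a degree-lowering procedure, rather than by any finite-dimensional spectral projection. Write $(\tau_y f)(x)=f(xy)$ for translation by $y\in G$; since $\mathscr{V}$ is a translation invariant linear space, it is closed under every operator of the form $\sum_l c_l\tau_{y_l}$ with $c_l\in\mathbb{K}$, $y_l\in G$. I first record two elementary facts about (generalized) polynomials. For any symmetric $k$-additive $A$ and any $y$, multiadditivity gives $\tau_y(\mathrm{diag}(A))=\mathrm{diag}(A)+(\text{terms of degree}\le k-1)$, so translation preserves the top-degree term and the difference $\tau_y p-p$ strictly lowers the degree of a polynomial $p$. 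Moreover, for an exponential $m$ one has $\tau_y(p\cdot m)=m(y)\,(\tau_y p)\cdot m$, so translation keeps each exponential component intact and acts only on its polynomial coefficient.

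The engine of the proof is the following reduction operator. Fix the index $i_0$ whose leading term $\mathrm{diag}(A_{i_0})$ we wish to retain, and let $i_1\neq i_0$ be an index whose coefficient polynomial is nonzero. Because $m_{i_0}$ and $m_{i_1}$ are distinct exponentials, there is a $y\in G$ with $m_{i_0}(y)\neq m_{i_1}(y)$; I set $T_y=\tau_y-m_{i_1}(y)\,\mathrm{id}$, an operator preserving $\mathscr{V}$. Applying $T_y$ to $\sum_i p_i m_i$ sends the $i$-th coefficient to $p_i'=m_i(y)\,\tau_y p_i-m_{i_1}(y)\,p_i$. For $i=i_1$ the top-degree terms cancel, so $\deg p_{i_1}'$ drops by at least one; for $i=i_0$ the leading term becomes $(m_{i_0}(y)-m_{i_1}(y))\,\mathrm{diag}(A_{i_0})$, a nonzero scalar multiple of $\mathrm{diag}(A_{i_0})$; and for every other $i$ the degree does not increase. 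Dividing by the scalar $m_{i_0}(y)-m_{i_1}(y)$ restores the $i_0$-leading term to exactly $\mathrm{diag}(A_{i_0})$, and the result still lies in $\mathscr{V}$.

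I then fix $i_0$ and induct on the quantity $S=\sum_{i\neq i_0}(\deg p_i+1)$, with the convention that a vanishing coefficient contributes $0$, which measures the total size of the unwanted components. The function $f$ itself has $i_0$-component $\mathrm{diag}(A_{i_0})+q_{i_0}$, so the induction starts with the required leading term already in place. If $S=0$ all other components vanish and the element has collapsed to a single monomial $(\mathrm{diag}(A_{i_0})+r_{i_0})m_{i_0}\in\mathscr{V}$; otherwise I pick any $i_1\neq i_0$ with $p_{i_1}\neq 0$ and apply the rescaled operator $T_y$ of the previous paragraph. This keeps the element in $\mathscr{V}$, preserves the $i_0$-leading term $\mathrm{diag}(A_{i_0})$, and strictly decreases $S$, so after finitely many steps I reach $(\mathrm{diag}(A_{i_0})+r_{i_0})m_{i_0}\in\mathscr{V}$ with $\deg r_{i_0}\le k_{i_0}-1$. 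Running this separately for each $i_0=1,\dots,n$ yields the assertion.

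The delicate point, and the reason the conclusion only produces some $r_i$ rather than the original $q_i$, is that one cannot shortcut the argument by decomposing the span of the translates of $f$ into its exponential isotypic pieces: over a general Abelian group the translates of even a single generalized monomial such as $\mathrm{diag}(A)\,m$ need not span a finite-dimensional space, so no spectral projection onto the $m_{i_0}$-component is available. The degree-lowering step is precisely what replaces that projection. The only items that must be verified with care are that $T_y$ lowers the degree of the targeted component while leaving the top-degree term of the retained component untouched, and that the distinctness of the $m_i$ always supplies a usable $y$; both follow from the two elementary facts recorded at the outset.
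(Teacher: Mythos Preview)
The paper does not give its own proof of this statement: it is quoted verbatim as \cite[Lemma~4.2]{Sze91} and used as a black box, so there is no in-paper argument to compare against. That said, your proof is correct, and it is essentially the classical argument one finds in Sz\'ekelyhidi's book: the modified difference operator $T_y=\tau_y-m_{i_1}(y)\,\mathrm{id}$ is exactly the operator $\Delta^{\ast}_{m_{i_1},y}$ used there, which lowers the polynomial degree of the $m_{i_1}$-component while acting by a nonzero scalar on the leading term of any other exponential component. Your bookkeeping with the quantity $S=\sum_{i\neq i_0}(\deg p_i+1)$ is a clean way to organize the induction, and the verification that the $i_0$-component keeps the form $\mathrm{diag}(A_{i_0})+(\text{degree}\le k_{i_0}-1)$ after each rescaled step is the crucial invariant. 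One cosmetic remark: the case $A_{i_0}=0$ is trivially covered (take $r_{i_0}=0$), but your argument handles it uniformly anyway since the rescaling constant $m_{i_0}(y)-m_{i_1}(y)$ is nonzero regardless.
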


From this, with the choice $\mathscr{V}= \left\{0\right\}$ and $X=\mathbb{K}$ we get the following. 

\begin{prop}
  Let $G$ be an Abelian group, $\mathbb{K}$ be a field and $n$ be a positive integer. 
  Suppose that for each $x\in G$ 
  \[
   p_{1}(x)m_{1}(x)+\cdots+p_{n}(x)m_{n}(x)=0
  \]
holds, where $m_{1}, \ldots, m_{n}\colon G\to \mathbb{K}$ are different exponentials and 
$p_{1}, \ldots, p_{n}\colon G\to \mathbb{K}$ are (generalized) polynomials. 
Then for all $i=1, \ldots, n$ the polynomial $p_{i}$ is identically zero. 
\end{prop}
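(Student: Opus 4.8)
The plan is to read off this Proposition from the Theorem stated immediately above it (Lemma~4.2 of \cite{Sze91}) by specializing $X=\mathbb{K}$ and $\mathscr{V}=\{0\}$, exactly as the text announces. First I would record the three ingredients needed. (i) $\{0\}$ is a translation invariant linear subspace of the space of $\mathbb{K}$-valued functions on $G$, so the Theorem is applicable. (ii) The standard structure theory of generalized polynomials on an Abelian group: every generalized polynomial $p\colon G\to\mathbb{K}$ of degree $k$ has a decomposition $p=\mathrm{diag}(A)+q$, where $A$ is a symmetric $k$-additive function, $\mathrm{diag}(A)$ is its leading (and, when $p$ has degree exactly $k$, nonzero) term, and $q$ is a generalized polynomial of degree at most $k-1$. (iii) A nonzero exponential never vanishes: from $m(xy)=m(x)m(y)$ one gets $m(e)=1$ and $m(x)m(x^{-1})=1$, so $m(x)\neq 0$ for every $x\in G$; this is why we may assume the $m_{i}$ are the nonzero exponentials required by the Theorem.

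To prove the statement I would argue by contradiction. Suppose some $p_{i}$ is not identically zero; after discarding the indices with $p_{i}\equiv 0$ and relabelling, assume every $p_{i}\neq 0$. For each $i$ set $k_{i}=\deg p_{i}$ and decompose $p_{i}=\mathrm{diag}(A_{i})+q_{i}$ with $A_{i}$ symmetric $k_{i}$-additive, $\mathrm{diag}(A_{i})\neq 0$, and $\deg q_{i}\leq k_{i}-1$. The hypothesis then becomes
\[
 \sum_{i=1}^{n}\bigl(\mathrm{diag}(A_{i})+q_{i}\bigr)m_{i}=0,
\]
and the left-hand side lies in $\mathscr{V}=\{0\}$. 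Applying the Theorem would produce generalized polynomials $r_{i}\colon G\to\mathbb{K}$ with $\deg r_{i}\leq k_{i}-1$ such that $\bigl(\mathrm{diag}(A_{i})+r_{i}\bigr)m_{i}=0$ for every $i$. Since $m_{i}$ is nowhere zero by (iii), this forces $\mathrm{diag}(A_{i})=-r_{i}$.

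The decisive step is the degree comparison. On the left stands a nonzero function that is homogeneous of degree exactly $k_{i}$, while on the right stands a generalized polynomial of degree at most $k_{i}-1$. Applying the $k_{i}$-fold difference operator $\Delta_{y_{1}}\cdots\Delta_{y_{k_{i}}}$ to the identity $\mathrm{diag}(A_{i})=-r_{i}$ annihilates $r_{i}$ and reproduces $k_{i}!\,A_{i}(y_{1},\dots,y_{k_{i}})$, so $A_{i}=0$, contradicting $\mathrm{diag}(A_{i})\neq 0$. Hence no such index can exist and every $p_{i}$ is identically zero.

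I expect the only genuinely delicate point to be this final comparison: one must be certain that replacing the original lower-order term $q_{i}$ by the possibly different $r_{i}$ furnished by the Theorem cannot revive the leading term, which is precisely why it matters that $\deg r_{i}\leq k_{i}-1$ is strictly below $k_{i}$ and that $\mathrm{diag}(A_{i})$ is genuinely homogeneous of degree $k_{i}$; everything else is bookkeeping. As an alternative that sidesteps the full strength of the Theorem, one could induct on $d=\max_{i}\deg p_{i}$: the base case $d=0$ is the classical linear independence of distinct nonzero exponentials, and the inductive step strips off the top homogeneous parts in the same way, lowering the maximal degree while leaving the exponentials $m_{i}$ untouched.
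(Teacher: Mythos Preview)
Your proposal is correct and takes exactly the approach the paper indicates: specialize the preceding Theorem with $\mathscr{V}=\{0\}$ and $X=\mathbb{K}$. The paper records only that one-line reduction, whereas you additionally spell out the degree comparison that finishes the argument; this is harmless extra detail, not a different route.
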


\section{Results}

\subsection*{The functional equation of homo-derivations}

As the theorem below shows, the notion of homo-derivations (in the sense of El Sofy \cite{Sof00}) can 
be characterized even \emph{without} additivity supposition.

\begin{thm}\label{Thm4}
Let $P$ and $Q$ be rings such that $P$ is a subring of $Q$ and assume that 
$\varepsilon$ is an arbitrary nonzero  element of the center of $Q$. 
 Function $h\colon P\to Q$ fulfills the functional equation 
 \begin{equation}\label{Eq_homo-deriv}
  h(xy)= h(x)y+xh(y)+\varepsilon h(x)h(y)
 \end{equation}
for all $x, y\in P$ if and only if there exists a multiplicative function $m\colon P\to Q$ such that 
\[
\varepsilon h(x)= m(x)-x 
\qquad 
\left(x\in P\right). 
\]
\end{thm}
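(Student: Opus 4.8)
The plan is to bypass functional-equation machinery entirely and reduce the whole biconditional to one algebraic identity, via the substitution $m(x) = \varepsilon h(x) + x$ (equivalently $\varepsilon h(x) = m(x)-x$). The guiding observation is that the right-hand side of \eqref{Eq_homo-deriv}, once multiplied through by $\varepsilon$, is exactly the cross-term expansion of a product $m(x)m(y)$: the Leibniz terms $h(x)y + xh(y)$ supply the mixed terms, while the correction $\varepsilon h(x)h(y)$ supplies the ``quadratic'' term that is precisely what is needed to turn the naive derivation expression into a genuine homomorphism.

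For the ``only if'' direction I would assume \eqref{Eq_homo-deriv}, set $m(x) = \varepsilon h(x) + x$, and compute $m(x)m(y)$ directly. Expanding the product and using that $\varepsilon$ lies in the center of $Q$ (so that it commutes past every factor, giving $\varepsilon h(x)\cdot\varepsilon h(y) = \varepsilon^{2}h(x)h(y)$ and $x\cdot\varepsilon h(y) = \varepsilon x h(y)$) yields
\[
 m(x)m(y) = \varepsilon\bigl[h(x)y + x h(y) + \varepsilon h(x)h(y)\bigr] + xy.
\]
By \eqref{Eq_homo-deriv} the bracket equals $h(xy)$, hence $m(x)m(y) = \varepsilon h(xy) + xy = m(xy)$, so $m$ is multiplicative and $\varepsilon h = m - \mathrm{id}$ holds by construction. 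Note that this direction needs no invertibility or cancellation of $\varepsilon$, so it is unconditional; centrality of $\varepsilon$ is the only ring-theoretic ingredient.

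For the converse I would run the same computation backwards: assuming $m$ multiplicative with $\varepsilon h(x) = m(x)-x$, substituting into the right-hand side of \eqref{Eq_homo-deriv} and multiplying by $\varepsilon$ collapses everything to $m(x)m(y) - xy = m(xy) - xy = \varepsilon h(xy)$, so that
\[
 \varepsilon\bigl[h(xy) - h(x)y - x h(y) - \varepsilon h(x)h(y)\bigr] = 0.
\]
The main obstacle lives exactly here: to recover \eqref{Eq_homo-deriv} I must cancel the central factor $\varepsilon$, which is legitimate when $\varepsilon$ is not a zero divisor but can fail otherwise (one can cook up $h$ with $\varepsilon h = m - \mathrm{id}$ violating \eqref{Eq_homo-deriv}, e.g.\ in $\mathbb{Z}/4\mathbb{Z}$ with $\varepsilon = 2$, $m = \mathrm{id}$). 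I would therefore organize the converse around $g = \varepsilon h$, which under the hypothesis satisfies the normalized identity $g(xy) = g(x)y + xg(y) + g(x)g(y)$ with the two directions matching with no loss, and invoke cancellability of $\varepsilon$ (or the implicit assumption that $\varepsilon$ is a non-zero-divisor) only at the single final step that re-expresses the normalized statement about $g$ as the statement \eqref{Eq_homo-deriv} about $h$.
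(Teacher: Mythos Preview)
Your approach is essentially the paper's own: multiply through by $\varepsilon$, recognize that adding $xy$ to the right-hand side of \eqref{Eq_homo-deriv} makes it factor as $(\varepsilon h(x)+x)(\varepsilon h(y)+y)$, and read off that $m=\varepsilon h+\mathrm{id}$ is multiplicative. The paper in fact writes out only this forward direction and does not comment on the converse at all.

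Your additional observation about the converse is correct and is a genuine gap in the theorem as stated. The condition $\varepsilon h(x)=m(x)-x$ constrains only $\varepsilon h$, not $h$ itself, so when $\varepsilon$ is a zero divisor one can perturb $h$ by anything annihilated by $\varepsilon$ without disturbing the hypothesis, while easily destroying \eqref{Eq_homo-deriv}. Your $\mathbb{Z}/4\mathbb{Z}$ example with $\varepsilon=2$, $m=\mathrm{id}$ works: $2h=0$ forces $h(x)\in\{0,2\}$, and taking $h(1)=2$ gives $h(1\cdot 1)=2$ while the right-hand side is $2+2+2\cdot 4=0$. Thus the ``if'' direction needs $\varepsilon$ to be a non--zero-divisor (or regular) in $Q$, exactly as you flag; the paper's statement is literally true only under that extra assumption, and your proposed organization around $g=\varepsilon h$ is the right way to isolate where cancellability enters.
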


\begin{proof}
 Multiplying equation \eqref{Eq_homo-deriv} with $\varepsilon$ leads to 
 \[
 \varepsilon h(xy)= \varepsilon h(x)y+x \varepsilon h(y)+\varepsilon h(x)\varepsilon h(y) 
  \qquad 
  \left(x, y\in P\right). 
 \]
If we add $xy$ to both sides of this equation, then 
  \[
 \varepsilon h(xy)+xy= \varepsilon h(x)y+x \varepsilon h(y)+\varepsilon h(x)\varepsilon h(y) +xy
  \qquad 
  \left(x, y\in P\right), 
 \]
 follows. Observe however that 
 \[
  \varepsilon h(x)y+x \varepsilon h(y)+\varepsilon h(x)\varepsilon h(y) +xy= 
  (\varepsilon h(x)+x)\cdot \left(\varepsilon h(y)+y\right) 
  \qquad 
  \left(x, y\in P\right), 
 \]
yielding exactly that the mapping $m\colon P\to Q$ defined through 
\[
 m(x)= \varepsilon h(x)+x 
 \qquad 
 \left(x\in P\right)
\]
is multiplicative. 
\end{proof}

\begin{rem}
 In the case the ring $Q$ is unital, $\varepsilon=1$ is a nonzero central element of the ring $Q$. Hence 
 any solution of the above equation can be represented as 
 \[
  h(x)= m(x)-x 
  \qquad 
  \left(x\in P\right), 
 \]
with an appropriate multiplicative function $m$. 
\end{rem}

As an immediate corollary we get the following. 

\begin{cor}
 Let $P$ be a subring of the ring $Q$ and assume that  $\varepsilon$ is an arbitrary nonzero element of the center of $Q$. 
 The  additive mapping $a\colon P\to Q$ fulfills functional equation 
 \[
  a(xy)= a(x)y+xa(y)+\varepsilon a(x)a(y)
 \]
for all $x, y\in P$ if and only if there exists a homomorphism $\varphi\colon P\to Q$ such that 
\[
 \varepsilon a(x)=\varphi(x)-x
 \qquad 
 \left(x\in P\right). 
\]
\end{cor}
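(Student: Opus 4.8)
The plan is to deduce this corollary directly from Theorem~\ref{Thm4}, the only additional ingredient being that the additivity of $a$ promotes the multiplicative function furnished by that theorem to a genuine homomorphism.

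First I would treat the forward implication. Assume $a$ is additive and satisfies the stated functional equation. Viewing $a$ merely as a function $P\to Q$ solving \eqref{Eq_homo-deriv} with $h=a$, Theorem~\ref{Thm4} produces a multiplicative map $m\colon P\to Q$ with $\varepsilon a(x)=m(x)-x$ for all $x\in P$. Rearranging gives $m(x)=\varepsilon a(x)+x$, so $m$ is the sum of the map $x\mapsto \varepsilon a(x)$ (additive because $a$ is, and because left multiplication by the fixed element $\varepsilon$ is additive) and the identity map $x\mapsto x$. Hence $m$ is additive; being also multiplicative, it is a homomorphism. Setting $\varphi=m$ then yields $\varepsilon a(x)=\varphi(x)-x$, as required.

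For the converse, suppose there is a homomorphism $\varphi\colon P\to Q$ with $\varepsilon a(x)=\varphi(x)-x$ for all $x\in P$. Since every homomorphism is in particular multiplicative, I may take $m=\varphi$ and invoke the (already established) backward implication of Theorem~\ref{Thm4} with $h=a$; this immediately shows that $a$ satisfies equation \eqref{Eq_homo-deriv}, that is, the functional equation of the corollary.

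Because the substance of both directions is carried by Theorem~\ref{Thm4}, I do not expect a genuine obstacle here. The one point that requires care, and the only place where the extra hypothesis on $a$ is actually used, is the observation in the forward direction that the additivity of $a$ forces $m$ to be additive; everything else is a direct translation of the theorem together with the trivial remark that a homomorphism is multiplicative.
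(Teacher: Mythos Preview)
Your proposal is correct and matches the paper's approach: the paper offers no separate proof, simply labeling the statement ``an immediate corollary'' of Theorem~\ref{Thm4}, and your argument is precisely the unpacking of that remark---apply the theorem and observe that the additivity of $a$ forces $m(x)=\varepsilon a(x)+x$ to be additive, hence a homomorphism, while conversely any homomorphism is in particular multiplicative.
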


In view of the above result, homo-derivations in the sense of El Sofy \cite{Sof00} on commutative rings can be characterized. 

\begin{cor}
Let $P$ be a subring of the commutative ring $Q$ and $a\colon P\to Q$ be a homo-derivation in the sense of El Sofy \cite{Sof00}. 
 Then and only then 
 there exists a homomorphism $\varphi\colon P\to Q$ such that 
\[
 a(x)=\varphi(x)-x
 \qquad 
 \left(x\in P\right). 
\]
\end{cor}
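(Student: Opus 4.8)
The plan is to read this statement off directly from the preceding corollary by specializing the central element to $\varepsilon=1$. Indeed, a homo-derivation $a$ in the sense of El Sofy is by definition additive and satisfies
\[
a(xy)=a(x)y+xa(y)+a(x)a(y)\qquad\left(x,y\in P\right),
\]
which is precisely the functional equation of the preceding corollary in the special case $\varepsilon=1$. Since $Q$ is commutative, its center is all of $Q$; in particular, whenever $Q$ carries an identity, $1$ is a nonzero central element, and then all the hypotheses of that corollary are satisfied with this choice of $\varepsilon$.

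For the forward implication I would simply invoke the corollary with $\varepsilon=1$: it yields a homomorphism $\varphi\colon P\to Q$ with $1\cdot a(x)=\varphi(x)-x$, that is, $a(x)=\varphi(x)-x$ for every $x\in P$. Conversely, suppose $a=\varphi-\mathrm{id}$ for some homomorphism $\varphi$. Then $a$ is additive, being the difference of two additive maps, and a single expansion
\[
(a(x)+x)(a(y)+y)=a(x)a(y)+a(x)y+xa(y)+xy
\]
combined with $\varphi(xy)=\varphi(x)\varphi(y)$ and $\varphi=a+\mathrm{id}$ shows that $a$ obeys El Sofy's identity; hence $a$ is a homo-derivation in that sense.

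It is worth isolating why the merely multiplicative map produced by Theorem \ref{Thm4} is here a genuine homomorphism. The additivity built into the notion of a homo-derivation forces $\varphi(x)=a(x)+x$ to be additive (a sum of the additive map $a$ and the identity), while Theorem \ref{Thm4} already secures its multiplicativity. This is exactly the upgrade recorded in the first corollary, and nothing beyond it is needed.

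In truth there is no substantial obstacle: the statement is a direct specialization, and the only point requiring a moment's care is the legitimacy of the choice $\varepsilon=1$. If one insists on dropping the unitality of $Q$, the symbol $1$ is no longer an element of the ring and the preceding corollary cannot be cited verbatim; in that case I would argue directly in the style of the proof of Theorem \ref{Thm4}, setting $\varphi(x)=a(x)+x$ and verifying multiplicativity through the displayed factorization, which holds in any ring and, notably, does not even appeal to commutativity.
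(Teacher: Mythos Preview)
Your proposal is correct and follows exactly the route the paper intends: the corollary is stated without proof as an immediate specialization of the preceding corollary (and hence of Theorem~\ref{Thm4}) with $\varepsilon=1$, which the remark after Theorem~\ref{Thm4} already flags as the natural choice in the unital case. Your explicit treatment of the converse direction and your observation about the non-unital case are more careful than anything the paper spells out, but they do not deviate from its approach.
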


The proposition below considers the other notion of homo-derivations. 

\begin{prop}\label{Prop1}
Let $P$ be a subring of the ring $Q$, the 
function $f\colon P\to Q$ fulfills the system of equations
 \[
 \begin{array}{rcl}
  f(xy)&=&f(x)f(y)\\
  f(xy)&=&f(x)y+xf(y)
 \end{array}
 \qquad 
 \left(x\in \mathbb{F}\right)
 \]
if and only if there exists a non-zero constant $\alpha\in Q$ such that  $\alpha \cdot f$ and 
$f\cdot \alpha$ are identically zero.  
\end{prop}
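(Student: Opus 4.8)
The plan is to work with the single identity obtained by subtracting the two given equations, namely
\[
 f(x)f(y)=f(x)y+xf(y)\qquad (x,y\in P),
\]
and then to feed multiplicativity $f(xy)=f(x)f(y)$ back into it to produce annihilator relations. First I would replace $y$ by a product $yz$ and use $f(yz)=f(y)f(z)$ to obtain $f(x)f(y)f(z)=f(x)yz+xf(y)f(z)$; substituting the displayed identity for $f(x)f(y)$ on the left and cancelling the common term $xf(y)f(z)$ yields $f(x)\,y\,f(z)=f(x)\,y\,z$, that is,
\[
 f(x)\,y\,\bigl(f(z)-z\bigr)=0\qquad(x,y,z\in P).
\]
The mirror substitution $x\mapsto zx$, together with $f(zx)=f(z)f(x)$, gives symmetrically $\bigl(f(z)-z\bigr)\,x\,f(y)=0$. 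Writing $g:=f-\mathrm{id}$, these read $f(x)\,y\,g(z)=0$ and $g(z)\,x\,f(y)=0$ for all $x,y,z\in P$.

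With these two relations in hand, the candidate annihilator is $\alpha:=x_{0}\,g(z_{0})\,y_{0}$ for fixed $x_{0},y_{0},z_{0}\in P$. Indeed, regrouping $f(x)\alpha=\bigl(f(x)\,x_{0}\,g(z_{0})\bigr)y_{0}$ lets the first relation apply, so $f(x)\alpha=0$, while regrouping $\alpha f(x)=x_{0}\bigl(g(z_{0})\,y_{0}\,f(x)\bigr)$ lets the second relation apply, so $\alpha f(x)=0$. Hence $\alpha$ annihilates the image of $f$ on both sides, exactly as required; only associative regrouping is used, and the role of the centre is not needed at this step.

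The hard part will be producing such an $\alpha$ that is genuinely \emph{nonzero}. The construction only guarantees $\alpha\in P\,g(P)\,P$, and in a non-unital ring (or one rich in zero divisors) it may happen that $P\,g(P)\,P=0$, so that every candidate of this shape vanishes. I expect to dispose of this degenerate situation separately: if $f\equiv 0$ then $g=-\mathrm{id}$ and any nonzero element of $Q$ serves as $\alpha$; and if $P\,g(P)\,P=0$, then $x\,g(z)\,y=0$ rewrites as $x\,f(z)\,y=xzy$, which forces a zero-multiplication-type behaviour on the relevant subring and from which a nonzero two-sided annihilator of $f(P)$ can be extracted directly (for instance out of $P$ itself). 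Pinning down the exact hypotheses under which this extraction goes through is the principal obstacle.

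For the converse I would reverse the bookkeeping: starting from a nonzero $\alpha$ with $\alpha f\equiv 0\equiv f\alpha$, one uses the two-sided annihilation of $f(P)$ to collapse the cross terms $f(x)f(y)$, $f(x)y$ and $xf(y)$ against one another and thereby recover both defining equations. This direction is essentially computational and I anticipate it being routine in comparison with the construction of $\alpha$ above.
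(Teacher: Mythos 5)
Your reduction to the single identity $f(x)f(y)=f(x)y+xf(y)$ matches the paper's starting point, and your derivation of the three-fold relations $f(x)\,y\,\bigl(f(z)-z\bigr)=0$ and $\bigl(f(z)-z\bigr)\,x\,f(y)=0$ is correct. But the route you take from there diverges from the paper's and, as you yourself flag, it leaves the decisive step unproved: your candidate $\alpha=x_{0}\bigl(f(z_{0})-z_{0}\bigr)y_{0}$ lives in $P\,(f-\mathrm{id})(P)\,P$, and nothing in your argument rules out that this set is $\{0\}$ even when $f$ and $f-\mathrm{id}$ are both nonzero (for instance, every such triple product vanishes whenever $P^{3}=\{0\}$). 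The sketch you offer for that degenerate situation --- that a nonzero two-sided annihilator ``can be extracted directly'' --- is precisely the content of the proposition and cannot be waved through. So the proposal has a genuine gap at its central point.

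The paper avoids the problem by never introducing the extra factors. It rearranges the identity as $f(x)\left[f(y)-y\right]=xf(y)$, argues that $f(y)-y=0$ cannot hold for every $y$ (since $f$ must also be a Leibniz function), picks $y^{*}$ with $\alpha:=f(y^{*})-y^{*}\neq 0$ and sets $\beta:=f(y^{*})$, so that $f(x)\alpha=x\beta$ for all $x$; it then feeds this back into the Leibniz equation to conclude $\beta=0$. Hence $\alpha$ is nonzero \emph{by construction} and $f(x)\alpha=x\beta=0$, and the symmetric rearrangement yields $\alpha\cdot f\equiv 0$ as well. The idea you are missing is exactly this intermediate claim that $f$ vanishes at any point where it differs from the identity: it lets one take $\alpha$ to be a single value of $f-\mathrm{id}$, whose nonvanishing is guaranteed by the choice of $y^{*}$, instead of a product of ring elements whose nonvanishing you cannot control. (Your closing remark that the converse is routine also deserves a second look: a nonzero $\alpha$ annihilating $f(P)$ on both sides does not by itself collapse the terms $f(x)y$ and $xf(y)$, so the ``if'' direction is not pure bookkeeping either.)
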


\begin{proof}
 Assume that the function $f\colon P\to Q$ satisfies the above system. 
 Then clearly, functional equation 
 \[
  f(x)f(y)=f(x)y+xf(y) 
  \qquad 
  \left(x, y\in P\right)
 \]
also holds. 
After some rearrangement  we arrive at 
\[
 f(x)\left[f(y)-y\right]= xf(y)
 \qquad 
 \left(x\in P\right). 
\]
Since $f$ must be a Leibniz function, 
\[
 f(y)-y=0 
\]
cannot hold for all $y\in P$. Thus there exists $y^{*}\in P$ such that 
$ f(y^{*})-y^{*}\neq 0$, from which 
\[
 f(x)\left[f(y^{*})-y^{*}\right]= xf(y^{*})
 \qquad 
  \left(x\in P\right). 
\]
In other words, there are constants $\alpha, \beta \in Q$ with $\alpha\neq 0$ such that 
\[
 f(x)\alpha = x \beta 
 \qquad 
 \left(x\in P\right). 
\]
Writing this back to the Leibniz equation $\beta =0$ follows, yielding that $f\cdot \alpha$ is identically zero. 
Changing the role of $x$ and $y$ in the above argument, $\alpha \cdot f \equiv 0$ also follows. 
\end{proof}

\begin{cor}
Let $P$ be a subring of the ring $Q$, 
a function $a\colon P\to Q$ is a homo-derivation in the sense of 
 Mehdi Ebrahimi--Pajoohesh \cite{MehPaj03} if and only if there exists a nonzero constant $\alpha\in Q$ such that 
 $\alpha \cdot a\equiv a\cdot \alpha\equiv 0$. Especially, if $Q$ has no zero-divisors, then $a$ has to be identically zero. 
\end{cor}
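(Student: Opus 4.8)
The plan is to obtain this Corollary as the immediate specialization of Proposition~\ref{Prop1} to the additive setting. A homo-derivation in the sense of Mehdi Ebrahimi--Pajoohesh is by definition a homomorphism that is at the same time a Leibniz function, so such a map $a\colon P\to Q$ satisfies both equations
\[
 a(xy)=a(x)a(y)
 \qquad\text{and}\qquad
 a(xy)=a(x)y+xa(y)
 \qquad
 \left(x, y\in P\right).
\]
This is exactly the system treated in Proposition~\ref{Prop1}. I would stress that the additivity contained in the homomorphism hypothesis plays no role in the forward implication: multiplicativity together with the Leibniz rule already place $a$ within the scope of that proposition. Hence the ``only if'' part is nothing more than an application of Proposition~\ref{Prop1}, producing a nonzero $\alpha\in Q$ with $\alpha\cdot a\equiv a\cdot\alpha\equiv 0$, and for the converse I would run the equivalence of Proposition~\ref{Prop1} backwards.

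For the final assertion I would argue directly. Suppose $Q$ has no zero-divisors and let $\alpha\neq 0$ be as produced above. Then for every $x\in P$ the relation $\alpha\cdot a(x)=0$ together with $\alpha\neq 0$ forces $a(x)=0$, since in a ring without zero-divisors a product can vanish only if one of its factors does. Thus $a\equiv 0$, and conversely the zero map is trivially a homo-derivation in both senses, so nothing is lost.

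I expect the delicate point to be the ``if'' direction together with the precise role of additivity. The mere existence of an annihilating element $\alpha$ with $\alpha\cdot a\equiv a\cdot\alpha\equiv 0$ does not by itself encode either multiplicativity or the Leibniz rule, so the backward implication must be read as the reverse of Proposition~\ref{Prop1} rather than as an independent elementary step, and one must check that the structure extracted in the proof of Proposition~\ref{Prop1}---namely that the witnessing $\alpha$ may be taken of the form $f(y^{*})-y^{*}$ for a suitable $y^{*}\in P$---is consistent with $a$ being additive as well. Separating what genuinely follows from Proposition~\ref{Prop1} from what additionally requires the homomorphism assumption is the step I would treat most carefully; once that is settled, the no-zero-divisors conclusion is immediate from the annihilation identity.
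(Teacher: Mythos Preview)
Your approach matches the paper's: the Corollary is stated without proof, immediately after Proposition~\ref{Prop1}, and is meant to be read as its direct specialization together with the obvious remark on zero-divisors. Your observation that additivity plays no role in the forward implication and your direct argument for the no-zero-divisors case are exactly what the paper leaves implicit. Your caution about the ``if'' direction is well placed---that direction is inherited verbatim from Proposition~\ref{Prop1} and is not separately argued in the paper---so there is nothing further to supply beyond invoking the proposition.
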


\subsection*{On the functional equation $f(xy)=h(x)h(y)+xk(y)+k(x)y$}

In this subsection we determine the solutions $f, h, k\colon \mathbb{F}\to \mathbb{K}$ of the functional equation 
\begin{equation}\label{Eq_gen}
 f(xy)=h(x)h(y)+xk(y)+k(x)y 
 \qquad 
 \left(x, y\in \mathbb{F}\right). 
\end{equation}
From this, the additive solutions of the same equation will follow immediately.  
Here we suppose that 
\emph{$\mathbb{K}$ is an algebraically closed field with $\mathrm{char}(\mathbb{K})\neq 2$ and 
$\mathbb{F}$ is a subfield of $\mathbb{K}$}. 

Observe that equation \eqref{Eq_gen} is a special Levi-Civit\`{a} equation. 
Therefore according to the value $\dim \mathrm{lin}\left(\mathrm{id}, h, k\right)$, we have to distinguish several cases. 
Clearly, \[\dim \mathrm{lin}\left(\mathrm{id}, h, k\right)=3\] means that the mappings involved in the right hand side of 
\eqref{Eq_gen} are linearly independent. Thus in the degenerate cases we have $\dim \mathrm{lin}\left(\mathrm{id}, h, k\right)<3$. 

\subsubsection*{Degenerate cases}

Firstly, let us assume that $\dim \mathrm{lin}\left(\mathrm{id}, h, k\right)=1$. In this situation there exist 
constants $\lambda_{1}, \lambda_{2}\in \mathbb{K}$ such that 
\[
 h(x)=\lambda_{1}x 
 \quad 
 \text{and}
 \quad 
 k(x)=\lambda_{2}x 
 \qquad 
 \left(x\in \mathbb{F}\right). 
\]

\begin{prop} 
Let $\lambda_{1}, \lambda_{2}\in \mathbb{K}$ be arbitrarily fixed. 
 Function $f\colon \mathbb{F}\to \mathbb{K}$ fulfills functional equation \eqref{Eq_gen} 
 if and only if 
 \[
  f(x)= \left(\lambda_{1}^{2}+2\lambda_{2}\right)x 
  \qquad 
  \left(x\in \mathbb{F}\right). 
 \]
\end{prop}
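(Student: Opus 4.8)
The plan is to substitute the given forms $h(x)=\lambda_{1}x$ and $k(x)=\lambda_{2}x$ directly into equation \eqref{Eq_gen} and simplify. First I would compute the right-hand side:
\[
 h(x)h(y)+xk(y)+k(x)y=\lambda_{1}^{2}xy+\lambda_{2}xy+\lambda_{2}xy=\left(\lambda_{1}^{2}+2\lambda_{2}\right)xy,
\]
so that, under the standing assumption $\dim \mathrm{lin}\left(\mathrm{id}, h, k\right)=1$, the functional equation collapses to
\[
 f(xy)=\left(\lambda_{1}^{2}+2\lambda_{2}\right)xy
 \qquad
 \left(x, y\in \mathbb{F}\right).
\]

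To recover the value of $f$ at a single argument, I would exploit that $\mathbb{F}$, being a subfield of $\mathbb{K}$, contains the multiplicative identity. Specializing $y=1$ (equivalently, writing an arbitrary $t\in\mathbb{F}$ as $t=t\cdot 1$) then gives
\[
 f(t)=\left(\lambda_{1}^{2}+2\lambda_{2}\right)t
 \qquad
 \left(t\in \mathbb{F}\right),
\]
which is precisely the asserted formula and establishes the ``only if'' direction. For the converse, I would substitute this $f$ together with $h, k$ as above back into \eqref{Eq_gen} and observe that both sides equal $\left(\lambda_{1}^{2}+2\lambda_{2}\right)xy$; this is a one-line verification.

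There is no substantive obstacle in this degenerate subcase. The single point deserving attention is the legitimacy of specializing $y=1$, which is guaranteed because $\mathbb{F}$ is a field and hence carries a unit; note in particular that neither algebraic closure of $\mathbb{K}$ nor the restriction $\mathrm{char}(\mathbb{K})\neq 2$ is needed here, these hypotheses becoming relevant only in the non-degenerate analysis via Theorem~\ref{Szekely}.
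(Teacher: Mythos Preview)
Your argument is correct and follows essentially the same route as the paper: substitute $h(x)=\lambda_{1}x$, $k(x)=\lambda_{2}x$ into \eqref{Eq_gen} to obtain $f(xy)=(\lambda_{1}^{2}+2\lambda_{2})xy$, from which the claimed form of $f$ is immediate. Your version is simply more explicit in justifying the specialization $y=1$ and in verifying the converse, and your remark that neither algebraic closedness nor $\mathrm{char}(\mathbb{K})\neq 2$ is needed here is accurate.
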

\begin{proof}
 In case 
 \[
 h(x)=\lambda_{1}x 
 \quad 
 \text{and}
 \quad 
 k(x)=\lambda_{2}x 
 \qquad 
 \left(x\in \mathbb{F}\right), 
\]
our equation reduces to 
\[
 f(xy)= \left(\lambda_{1}^{2}+2\lambda_{2}\right)xy 
  \qquad 
  \left(x, y\in \mathbb{F}\right), 
\]
from which the results follows immediately. 
\end{proof}

Secondly, assume that $\dim \mathrm{lin}\left(\mathrm{id}, h, k\right)=2$, which can happen in different ways. 
If $\left\{ \mathrm{id}, h\right\}$ are linearly dependent, that is 
\[
 h(x)= \lambda x 
 \qquad 
 \left(x\in \mathbb{F}\right)
\]
holds with a certain $\lambda\in \mathbb{K}$, 
then we have the following. 

\begin{prop}\label{Prop4}
 Let $\lambda\in \mathbb{K}$ be an arbitrary constant. 
 Functions $f, k\colon \mathbb{F}\to \mathbb{K}$ fulfill the functional equation 
 \[
  f(xy)=\lambda^{2}xy+xk(y)+k(x)y  
  \qquad 
  \left(x, y\in \mathbb{F}\right)
 \]
if and only if there exists a Leibniz function 
$\delta \colon \mathbb{F}\to \mathbb{K}$ such that 
\[
 \begin{array}{rcl}
k(x)&=&k(1)x+\delta(x)\\
f(x)&=&\left(\lambda^{2}+ 2k(1)\right)x+\delta(x)
\end{array}
\qquad 
\left(x\in \mathbb{F}\right). 
\]
\end{prop}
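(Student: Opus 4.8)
The plan is to recognise the equation as a ``shifted'' Leibniz rule and to isolate the genuine derivation part of $f$. Since $\mathbb{F}$, being a subfield of $\mathbb{K}$, contains the unit $1$, the substitution $y=1$ is the natural first move; everything then follows by comparing coefficients of the diagonal term $xy$ against the Leibniz cross-terms.

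For the sufficiency (``if'') direction I would simply substitute the proposed forms $k(x)=k(1)x+\delta(x)$ and $f(x)=\left(\lambda^{2}+2k(1)\right)x+\delta(x)$ into the right-hand side of the equation. Expanding $xk(y)+k(x)y$ produces the cross-terms $x\delta(y)+\delta(x)y$ together with $2k(1)xy$; invoking the Leibniz rule $\delta(xy)=\delta(x)y+x\delta(y)$ collapses the cross-terms to $\delta(xy)$, and the remaining coefficient $\lambda^{2}+2k(1)$ matches the linear part of $f(xy)$. One small consistency check is that a Leibniz function on a field satisfies $\delta(1)=0$: indeed $\delta(1)=\delta(1)\cdot 1+1\cdot\delta(1)=2\delta(1)$ forces $\delta(1)=0$ in any ring, so the constant $k(1)$ appearing in the formula for $k$ is genuinely the value of $k$ at $1$.

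For the necessity (``only if'') direction I would set $y=1$ in the functional equation (with $h=\lambda\,\mathrm{id}$, so $h(x)h(y)=\lambda^{2}xy$) to obtain
\[
f(x)=\lambda^{2}x+xk(1)+k(x)\qquad (x\in\mathbb{F}),
\]
which expresses $k$ through $f$. Guided by the target formula I would then \emph{define}
\[
\delta(x):=f(x)-\left(\lambda^{2}+2k(1)\right)x\qquad(x\in\mathbb{F}),
\]
so that automatically $f(x)=\left(\lambda^{2}+2k(1)\right)x+\delta(x)$ and, combining with the displayed relation, $k(x)=k(1)x+\delta(x)$. It then remains to check that $\delta$ is a Leibniz function, and this is where \eqref{Eq_gen} is used in full: substituting both expressions back into the equation, every term proportional to $xy$ cancels identically, leaving precisely $\delta(xy)=x\delta(y)+\delta(x)y$.

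I do not expect a serious obstacle, since the content is essentially a single algebraic identity. The only point requiring care is the choice of the shift $\delta(x)=f(x)-\left(\lambda^{2}+2k(1)\right)x$ rather than the naive $f(x)-\left(\lambda^{2}+k(1)\right)x$ suggested by the $y=1$ substitution: the extra summand $k(1)x$ is exactly what is needed to absorb the diagonal contribution $2k(1)xy$ and leave a clean Leibniz identity. It is worth noting that no additivity of $f$ or $k$ is assumed or produced, and correspondingly $\delta$ is only required to be a Leibniz function, consistent with the terminology recalled earlier.
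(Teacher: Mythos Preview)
Your proof is correct and follows essentially the same route as the paper: both substitute $y=1$ to obtain $f(x)=(\lambda^{2}+k(1))x+k(x)$, then substitute back into the original equation and verify that the shifted function is Leibniz. The only cosmetic difference is that the paper first isolates the functional equation $k(xy)+k(1)xy=xk(y)+k(x)y$ and defines $\delta(x)=k(x)-k(1)x$, whereas you define $\delta$ directly from $f$; you also spell out the sufficiency direction and the check $\delta(1)=0$, which the paper leaves implicit.
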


\begin{proof}
Under the above assumptions, equation \eqref{Eq_gen} with $y=1$ yields that 
\[
 f(x)= (\lambda^{2}+k(1))x+k(x) 
 \qquad 
 \left(x\in \mathbb{F}\right). 
\]
Writing this back into our equation, we get that 
\[
 k(xy)+(\lambda^{2}+k(1))xy= xk(y)+k(x)y +\lambda^{2}xy
 \qquad 
 \left(x, y\in \mathbb{F}\right).  
\]
This means that the function $k\colon \mathbb{F}\to \mathbb{K}$ fulfills 
\[
 k(xy)+k(1)xy= xk(y)+k(x)y 
 \qquad 
 \left(x, y\in \mathbb{F}\right). 
\]
Thus, there exists a Leibniz function $\delta\colon \mathbb{F}\to \mathbb{K}$ such that 
\[
\begin{array}{rcl}
k(x)&=&k(1)x+\delta(x)\\
f(x)&=&\left(\lambda^{2}+2k(1)\right)x+\delta(x)
\end{array}
\qquad 
\left(x\in \mathbb{F}\right). 
\]
\end{proof}

\begin{cor}
 Let $\lambda\in \mathbb{K}$ be an arbitrary constant. 
 The additive functions $f, k\colon \mathbb{F}\to \mathbb{K}$ fulfill the functional equation 
 \[
  f(xy)=\lambda^{2}xy+xk(y)+k(x)y  
  \qquad 
  \left(x, y\in \mathbb{F}\right)
 \]
if and only if there exists a derivation 
$d \colon \mathbb{F}\to \mathbb{K}$ such that 
\[
 \begin{array}{rcl}
k(x)&=&k(1)x+d(x)\\
f(x)&=&\left(\lambda^{2}+ 2k(1)\right)x+d(x)
\end{array}
\qquad 
\left(x\in \mathbb{F}\right). 
\]
\end{cor}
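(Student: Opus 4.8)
The plan is to obtain this corollary directly from Proposition \ref{Prop4}, the only new ingredient being that the additivity hypothesis upgrades the Leibniz function produced there into a genuine derivation.

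For the forward implication I would start from the observation that the additive maps $f,k$ in particular solve the functional equation treated in Proposition \ref{Prop4}. That proposition therefore supplies a Leibniz function $\delta\colon\mathbb{F}\to\mathbb{K}$ with
\[
k(x)=k(1)x+\delta(x)\quad\text{and}\quad f(x)=\left(\lambda^{2}+2k(1)\right)x+\delta(x)\qquad\left(x\in\mathbb{F}\right).
\]
The key step is then to read the first formula as $\delta(x)=k(x)-k(1)x$, which exhibits $\delta$ as the difference of the additive function $k$ and the manifestly additive map $x\mapsto k(1)x$. Hence $\delta$ is additive; being simultaneously additive and a Leibniz function, it is by definition a derivation, and setting $d:=\delta$ yields the claimed representation.

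For the converse I would reverse the reasoning. If $d$ is a derivation, then it is in particular a Leibniz function, so the backward implication of Proposition \ref{Prop4} (applied with $\delta=d$) guarantees that the maps $f,k$ given by the two formulas satisfy the functional equation in question; and since $d$ is additive while $x\mapsto cx$ is additive for every constant $c\in\mathbb{K}$, both $f$ and $k$ are additive, as required.

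I do not expect a substantive obstacle here, as the analytic content is entirely absorbed by Proposition \ref{Prop4}. The single point deserving a moment's care is the equivalence between the two stated additivity requirements (on $f$ and on $k$) and the additivity of the single function $\delta$; this is immediate from the explicit formulas, since in each the term added to $\delta$ is linear in $x$ and therefore additive.
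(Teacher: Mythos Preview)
Your proposal is correct and matches the paper's intent: the corollary is stated immediately after Proposition~\ref{Prop4} without a separate proof, so the paper implicitly relies on exactly the argument you give---apply Proposition~\ref{Prop4}, then observe that $\delta(x)=k(x)-k(1)x$ inherits additivity from $k$, whence the Leibniz function $\delta$ is a derivation. Your treatment of both implications, including the remark that additivity of $f$ and of $k$ are each equivalent to additivity of $\delta$ via the explicit formulas, is precisely the expected reasoning.
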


Our second case is when $\left\{ \mathrm{id}, k\right\}$ are linearly dependent, that is if 
\[
 k(x)= \lambda x 
 \qquad 
 \left(x\in \mathbb{F}\right)
\]
with a certain $\lambda \in \mathbb{K}$. 

\begin{prop}\label{Prop3}
 Let $\lambda\in \mathbb{F}$ be arbitrarily fixed.  
 Functions $f, h\colon \mathbb{F}\to \mathbb{K}$ fulfill the functional equation 
 \[
  f(xy)=h(x)h(y)+2\lambda xy  
  \qquad 
  \left(x, y\in \mathbb{F}\right)
 \]
if and only if there exists a multiplicative function $m\colon \mathbb{F}\to \mathbb{K}$ such that 
\[
 \begin{array}{rcl}
  h(x)&=&h(1)m(x)\\
  f(x)&=&h(1)^{2}m(x)+2\lambda x
 \end{array}
\qquad 
\left(x\in \mathbb{F}\right). 
\]
\end{prop}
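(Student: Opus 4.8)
The plan is to handle the two directions separately, treating the ``if'' direction by direct verification and reserving the real work for the ``only if'' direction, where the key device is the substitution $y=1$, available because $\mathbb{F}$ is a field and hence contains its unit. For sufficiency, suppose $m$ is multiplicative and $h,f$ have the asserted forms. Then I simply compute
\[
 h(x)h(y)+2\lambda xy = h(1)^{2}m(x)m(y)+2\lambda xy = h(1)^{2}m(xy)+2\lambda xy = f(xy),
\]
where the middle step uses $m(xy)=m(x)m(y)$; this is exactly the functional equation in its present degenerate form.

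For necessity, I first put $y=1$ into the equation to obtain
\[
 f(x)=h(x)h(1)+2\lambda x \qquad \left(x\in\mathbb{F}\right),
\]
expressing $f$ through $h$. Substituting this (with the argument $xy$) back into the original equation and cancelling the common summand $2\lambda xy$ leaves the clean multiplicative-type relation
\[
 h(1)\,h(xy)=h(x)h(y) \qquad \left(x,y\in\mathbb{F}\right).
\]
I then split according to whether $h(1)$ vanishes. If $h(1)\neq 0$, I set $m(x)=h(x)/h(1)$ and check that $m(x)m(y)=h(x)h(y)/h(1)^{2}=h(1)h(xy)/h(1)^{2}=h(xy)/h(1)=m(xy)$, so $m$ is multiplicative; then $h(x)=h(1)m(x)$, and feeding this into the formula for $f$ gives $f(x)=h(1)^{2}m(x)+2\lambda x$, as claimed.

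If instead $h(1)=0$, the relation degenerates to $h(x)h(y)=0$ for all $x,y$; taking $y=x$ yields $h(x)^{2}=0$, and since $\mathbb{K}$ is a field (hence has no nonzero nilpotents) this forces $h\equiv 0$. Consequently $f(x)=2\lambda x$, and the required representation still holds with any multiplicative $m$ (for instance $m\equiv 0$), since the coefficients $h(1)$ and $h(1)^{2}$ annihilate the $m$-terms. I expect this degenerate subcase to be the only delicate point: one must notice that the multiplicative structure collapses, invoke the absence of zero divisors to deduce $h\equiv 0$, and verify that the stated form remains attainable despite the freedom in choosing $m$. Everything else reduces to routine substitution and cancellation.
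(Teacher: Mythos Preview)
Your proof is correct and follows essentially the same approach as the paper: substitute $y=1$ to express $f$ in terms of $h$, feed this back to obtain $h(1)h(xy)=h(x)h(y)$, and then split on whether $h(1)$ vanishes. The only cosmetic differences are that the paper packages the first step via the auxiliary function $\widetilde{f}(x)=f(x)-2\lambda x$ and does not spell out the sufficiency direction, whereas you verify it explicitly.
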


\begin{proof}
 Define the function $\widetilde{f}\colon \mathbb{F}\to \mathbb{K}$ through 
 \[
  \widetilde{f}(x)=f(x)-2\lambda x 
  \qquad 
  \left(x\in \mathbb{F}\right)
 \]
to deduce that 
\[
 \widetilde{f}(xy)= h(x)h(y)
 \qquad 
 \left(x, y\in \mathbb{F}\right). 
\]
This identity with $y=1$ implies that 
\[
 \widetilde{f}(x)=h(1)h(x)
 \qquad 
 \left(x\in \mathbb{F}\right). 
\]
Therefore, 
\begin{enumerate}[A)]
\item either $h(1)=0$ from which 
\[
 f(x)=2\lambda x 
 \qquad 
 \left(x\in \mathbb{F}\right)
\]
and $h\equiv 0$ follows; 
\item or $h(1)\neq 0$ from which we get that 
\[
 \dfrac{h(xy)}{h(1)}= \dfrac{h(x)}{h(1)}\cdot \dfrac{h(y)}{h(1)} 
 \qquad 
 \left(x, y\in \mathbb{F}\right). 
\]
\end{enumerate}
All in all, there exists a multiplicative function $m\colon \mathbb{F}\to \mathbb{K}$ such that 
\[
 h(x)=h(1)m(x)
 \qquad 
 \left(x\in \mathbb{F}\right). 
\]
From this we also obtain that 
\[
 f(x)=h(1)^{2}m(x)+2\lambda x
 \qquad 
 \left(x\in \mathbb{F}\right). 
\]

\end{proof}

\begin{rem}
 Similarly, as previously, the additive solutions of the above equation are of the form 
 \[
 \begin{array}{rcl}
  h(x)&=&h(1)\varphi(x)\\
  f(x)&=&h(1)^{2}\varphi(x)+2\lambda x
 \end{array}
\qquad 
\left(x\in \mathbb{F}\right),  
\]
with a certain homomorphism $\varphi\colon \mathbb{F}\to \mathbb{K}$. 
\end{rem}

Finally, the last possibility is that $\left\{h, k\right\}$ is linearly dependent. In this case 
there are constants  $\lambda_{1}, \lambda_{2}\in \mathbb{K}$ not vanishing simultaneously such that 
\[
 \lambda_{1}k(x)+\lambda_{2}h(x)=0 
\]
for all $x\in \mathbb{F}$. Again we have the following alternative. 
\begin{enumerate}[A)]
 \item Either $\lambda_{2}=0$ and then $k\equiv 0$. In this case \eqref{Eq_gen} is 
 \[
  f(xy)= h(x)h(y) 
  \qquad 
  \left(x, y\in \mathbb{F}\right). 
 \]
Using Proposition \ref{Prop3} we finally get that there exists a multiplicative function 
$m\colon \mathbb{F}\to \mathbb{K}$ such that 
\[
 \begin{array}{rcl}
  h(x)&=&h(1)m(x)\\
  f(x)&=&h(1)^{2}m(x)
 \end{array}
\qquad 
\left(x\in \mathbb{F}\right). 
\]
\item Or $\lambda_{2}\neq 0$ and then there exists a constant $\lambda\in \mathbb{K}$ such that 
 \[
  h(x)=\lambda k(x) 
  \qquad 
  \left(x\in \mathbb{F}\right). 
 \]
In this case equation \eqref{Eq_gen} is of the form 
\[
 f(xy)=k(x)y+xk(y)+\lambda^{2}k(x)k(y) 
 \qquad 
 \left(x, y\in \mathbb{F}\right). 
\]
\end{enumerate}

\begin{prop}
Let $\lambda\in \mathbb{K}$. 
Functions $f, k\colon \mathbb{F}\to \mathbb{K}$ fulfill the functional equation 
 \[
  f(xy)=xk(y)+k(x)y+\lambda^{2}k(x)k(y)
  \qquad 
  \left(x, y\in \mathbb{F}\right)
 \]
if and only if 
\begin{enumerate}[A)]
\item in case $\lambda=0$ and there exists a Leibniz function $\delta\colon \mathbb{F}\to \mathbb{K}$ such that 
\[
\begin{array}{rcl}
 k(x)&=&k(1)x+\delta(x)\\
 f(x)&=&2k(1)x+\delta(x)
\end{array}
 \qquad 
 \left(x\in \mathbb{F}\right), 
\]
\item in case $\lambda\neq 0$
\begin{enumerate}[(a)]
\item either there exists a constant $\gamma \in \mathbb{K}$ such that 
\[
 \begin{array}{rcl}
  k(x)&=&\gamma x\\
  f(x)&=&\left(\lambda^{2}\gamma^{2}+2\gamma \right) x
 \end{array}
\qquad 
\left(x\in \mathbb{F}\right). 
\]
\item or there exists a multiplicative function $m\colon \mathbb{F}\to \mathbb{K}$ and a constant 
$\gamma\in \mathbb{K}$ such that 
\[
 \begin{array}{rcl}
k(x)&=& -\dfrac{1}{\lambda^{2}}x+\gamma m(x)\\[2mm]
f(x)&=&-\dfrac{1}{\lambda^2}x+\gamma^{2}\lambda^{2}m(x)
 \end{array}
 \qquad 
 \left(x\in \mathbb{F}\right). 
\]

\end{enumerate}
\end{enumerate}
\end{prop}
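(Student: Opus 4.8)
The plan is to separate the two regimes $\lambda=0$ and $\lambda\neq 0$, which call for genuinely different treatments. When $\lambda=0$ the equation collapses to $f(xy)=xk(y)+k(x)y$, exactly the shape handled in the proof of Proposition~\ref{Prop4}. I would set $y=1$ to get $f(x)=k(1)x+k(x)$, substitute this back, and cancel to arrive at $k(xy)-k(1)xy=x\big(k(y)-k(1)y\big)+\big(k(x)-k(1)x\big)y$. Defining $\delta(x)=k(x)-k(1)x$ then shows that $\delta$ is a Leibniz function, while $f(x)=k(1)x+k(x)=2k(1)x+\delta(x)$; this is precisely alternative~A).

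For $\lambda\neq 0$ the decisive step is to mimic the factorisation trick of Theorem~\ref{Thm4}. Multiplying the equation by $\lambda^{2}$ and adding $xy$ to both sides, the right-hand side becomes a perfect product, so that with the abbreviation $m(x)=x+\lambda^{2}k(x)$ one obtains
\[
\lambda^{2}f(xy)+xy=\big(x+\lambda^{2}k(x)\big)\big(y+\lambda^{2}k(y)\big)=m(x)m(y).
\]
The left-hand side is nothing but $g(xy)$, where $g(t)=\lambda^{2}f(t)+t$, so the whole problem reduces to the Pexiderised multiplicative equation $g(xy)=m(x)m(y)$.

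I would then solve this last equation by the standard substitution $y=1$, giving $g(x)=m(1)m(x)$ and hence $m(1)\,m(xy)=m(x)m(y)$. Here I would distinguish according to whether $\{\mathrm{id},k\}$ is linearly dependent. If $k=\gamma\,\mathrm{id}$ for some constant $\gamma$, substituting directly into the original equation yields $f(xy)=(2\gamma+\lambda^{2}\gamma^{2})xy$, which is alternative B)(a). Otherwise $m$ is not a scalar multiple of $\mathrm{id}$, and in particular $m(1)\neq 0$: were $m(1)=0$, then $m(x)m(y)=g(xy)=0$ for all $x,y$, whence $m(x)^{2}=0$ and, as $\mathbb{K}$ has no zero divisors, $m\equiv 0$, i.e.\ $k=-\lambda^{-2}\mathrm{id}$ after all. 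With $m(1)\neq 0$ the function $M:=m/m(1)$ is multiplicative, and unwinding the definitions of $m$ and $g$ gives $k(x)=-\lambda^{-2}x+\lambda^{-2}m(1)M(x)$ together with $f(x)=-\lambda^{-2}x+\lambda^{-2}m(1)^{2}M(x)$; putting $\gamma=\lambda^{-2}m(1)$ reproduces alternative B)(b).

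The routine parts are the two cancellations and the elementary solution of $g(xy)=m(x)m(y)$. I expect the only genuinely delicate point to be the bookkeeping at the boundary: the degenerate solution $k=f=-\lambda^{-2}\mathrm{id}$ surfaces both as the $m(1)=0$ case and as the specialisation $M=\mathrm{id}$ of B)(b) (equivalently $\gamma=-\lambda^{-2}$ in B)(a)). One must therefore check that the stated alternatives \emph{jointly} exhaust every solution while tolerating this overlap, rather than reading them as mutually exclusive.
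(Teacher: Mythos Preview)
Your argument is correct. For $\lambda=0$ it coincides with the paper's: both substitute $y=1$, feed the resulting expression for $f$ back in, and recognise $\delta(x)=k(x)-k(1)x$ as a Leibniz function.

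For $\lambda\neq 0$ the two proofs diverge. The paper introduces $h(x)=x+\tfrac{\lambda^{2}}{2}k(x)$ and rewrites the equation as the \emph{sine-type} identity
\[
f(xy)=k(x)h(y)+h(x)k(y),
\]
then appeals to the (linearly dependent\,/\,independent) dichotomy for such sine equations to read off alternatives~(a) and~(b). You instead take the full coefficient and set $m(x)=x+\lambda^{2}k(x)$, which---via the factorisation trick of Theorem~\ref{Thm4}---collapses the equation to the Pexiderised \emph{multiplicative} identity $g(xy)=m(x)m(y)$ with $g=\lambda^{2}f+\mathrm{id}$. This is more self-contained: the multiplicative Pexider equation is solved in two lines and you never need the solution theory of the sine equation (which the paper invokes only by reference). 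The paper's route, on the other hand, keeps the problem inside the Levi-Civit\`{a} framework that governs the rest of the section. Your bookkeeping at the overlap (the solution $k=-\lambda^{-2}\mathrm{id}$ appearing under both (a) and (b)) is handled correctly.
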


\begin{proof}
 Observe, that our equation with $y=1$ immediately yields that 
 \[
  f(x)= (1+\lambda^{2}k(1))k(x)+k(1)x
  \qquad
  \left(x\in \mathbb{F}\right). 
 \]
If $\lambda=0$, then from this we get that the mapping $\tilde{k}$ defined on $\mathbb{F}$ by 
\[
 \tilde{k}(x)=k(x)-k(1)x 
 \qquad 
 \left(x\in \mathbb{F}\right), 
\]
is a Leibniz function. 

If $\lambda\neq 0$, then define the function $h\colon \mathbb{F}\to \mathbb{K}$ by 
\[
 h(x)= x+\dfrac{\lambda^{2}}{2}k(x)
 \qquad 
 \left(x\in \mathbb{F}\right)
\]
to derive 
\[
 f(xy)= k(x)h(y)+h(x)k(y) 
 \qquad 
 \left(x, y\in \mathbb{F}\right), 
\]
that is the same sine-type equation as in the proof of Proposition \ref{Prop4}. 
Similarly as there, a careful adaptation shows that alternative (a) corresponds to the case when 
$k$ and $h$ are linearly dependent, while alternative (b) corresponds to the case when 
$k$ and $h$ are linearly independent. 
\end{proof}

\begin{rem}
 If we would like to describe the additive solutions of the above functional equation, then the mapping 
 $\delta$ should be replaced by a derivation $d\colon \mathbb{F}\to \mathbb{K}$, and the mapping 
 $m$ should be replaced by a homomorphism $\varphi\colon \mathbb{F}\to\mathbb{K}$. 
\end{rem}

\subsubsection*{The non-degenerate case}

In this subsection we investigate the so-called non-degenerate case. 
More precisely, the problem to be studied is the following. 
Let \emph{$\mathbb{K}$ be an algebraically closed field with $\mathrm{char}(\mathbb{K})\neq 2$ zero and 
$\mathbb{F}$ be a subfield of $\mathbb{K}$}, 
and $f, h, k\colon \mathbb{F}\to \mathbb{K}$ be functions so that the system 
$\left\{\mathrm{id}, h, k\right\}$ is \emph{linearly independent}. 
In what follows we determine the solutions of the functional equation 
\[
 f(xy)=h(x)h(y)+xk(y)+k(x)y 
 \qquad 
 \left(x, y\in \mathbb{F}\right). 
\]

Using the results of Székelyhidi \cite{Sze91} and McKiernan \cite{McK77} delineated in the first section, 
we derive immediately that the solutions $f, h, k\colon \mathbb{F}\to \mathbb{K}$ of the above equation 
are exponential polynomials of degree at most two. 
Depending on this degree we have three different possibilities, see pages 89-92 of \cite{Sze91} where the description 
of the functional equation 
\[
 f(xy)= g_{1}(x)h_{1}(y)+g_{2}(x)h_{2}(y)+g_{3}(x)h_{3}(y) 
\]
can be found. This obviously covers our equation with the choice 
\[
 \begin{array}{rcl}
  g_{1}(x)=h_{1}(x)&=&h(x)\\
  g_{2}(x)&=&x\\
  h_{2}(x)&=&k(x)\\
  g_{3}(x)&=&k(x)\\
  h_{3}(x)&=&x
 \end{array}
\qquad 
\left(x\in \mathbb{F}\right). 
\]

The first possibility is that there exist different nonzero multiplicative functions 
$m_{1}, m_{2}, m_{3} \colon \mathbb{F}\to \mathbb{K}$ and constants $\alpha_{i}, \beta_{j}^{(i)}, \gamma_{i}^{(j)}\in \mathbb{K}$, 
$i, j=1, 2, 3$ such that 
\[
 \begin{array}{rcl}
  f(x)&=&\alpha_{1}m_{1}(x)+\alpha_{2}m_{2}(x)+\alpha_{3}m_{3}(x)\\[2mm]
  g_{i}(x)&=&\beta^{(i)}_{1}m_{1}(x)+\beta^{(i)}_{2}m_{2}(x)+\beta^{(i)}_{3}m_{3}(x)\\[2mm]
  h_{i}(x)&=&\gamma^{(i)}_{1}m_{1}(x)+\gamma^{(i)}_{2}m_{2}(x)+\gamma^{(i)}_{3}m_{3}(x)\\
 \end{array}
\qquad 
\left(x\in \mathbb{F}\right). 
\]
Condition $g_{1}=h_{1}$ implies that 
\[
 \beta_{j}^{(1)}= \gamma_{j}^{(1)} 
 \qquad 
 \left(j=1, 2, 3\right). 
\]
Similarly, from $h_{2}=g_{3}$ we obtain that 
\[
 \beta^{(3)}_{j} =\gamma_{j}^{(2)}
  \qquad 
 \left(j=1, 2, 3\right). 
\]
Finally, from
\[
 g_{2}(x)=h_{3}(x)=x 
 \qquad
 \left(x\in \mathbb{F}\right)
\]
we derive that one of the multiplicative functions $m_{1}, m_{2}, m_{3}$ is the identity, say $m_{1}$ and 
\[
 \begin{array}{rcl}
  \beta_{2}^{(2)}=\beta_{3}^{(2)}=0& \; & \beta_{1}^{(2)}=1\\[2mm]
  \gamma_{2}^{(3)}=\gamma_{3}^{(3)}=0& \; & \gamma_{1}^{(3)}=1. 
 \end{array}
\]
Using this and our functional equation, we get that for the above constants 
\[
 \begin{pmatrix}
  \beta_{1}^{(1)} &  1 & \beta_{1}^{(3)}\\
  \beta_{2}^{(1)} & 0& \beta_{2}^{(3)}\\
  \beta_{3}^{(1)} & 0 & \beta_{3}^{(3)}
 \end{pmatrix}
 \cdot 
 \begin{pmatrix}
  \beta_{1}^{(1)} &  \beta_{2}^{(1)} & \beta_{3}^{(1)}\\
  \beta_{1}^{(3)} & \beta_{2}^{(3)}& \beta_{3}^{(3)}\\
  1 & 0 & 0
 \end{pmatrix}
 =
 \begin{pmatrix}
  \alpha_{1} & 0& 0\\
  0& \alpha_{2} & 0\\
  0& 0& \alpha_{3}
 \end{pmatrix}
\]
has to hold. Especially, $\beta_{2}^{(1)}\cdot\beta_{3}^{(1)}=0$, which yields that the coefficient of $m_{2}$ or that of $m_{3}$ is zero. 
All in all, there exists a multiplicative function $m\colon \mathbb{F}\to \mathbb{K}$ and constants such that 
\[
 \begin{array}{rcl}
  f(x)&=&(\beta_{1}^{2}+2\gamma_{1})x+\beta_{2}^{2}m(x)\\
  h(x)&=&\beta_{1}x+\beta_{2}m(x)\\
  k(x)&=&\gamma_{1}x+\gamma_{2}m(x)
 \end{array}
\qquad 
\left(x\in \mathbb{F}\right). 
\]
In this case however the functions $\mathrm{id}, h$ and $k$ span a linear space of dimension at most two. 
Notice that we are interested in the case $\dim \mathrm{lin}\left(\mathrm{id}, h, k\right)=3$. Therefore the above type of solutions 
does not appear. 

The second possibility is that there exist different multiplicative functions 
$m_{1}, m_{2}$ and a logarithmic function $l\colon \mathbb{F}^{\times}\to \mathbb{K}$ and constants 
such that 
\[
 \begin{array}{rcl}
  f(x)&=&\left(\alpha_{1}l(x)+\alpha_{2}\right)m_{1}(x)+\alpha_{3}m_{2}(x)\\
  g_{i}&=&\left(\beta_{1}^{(i)}l(x)+\beta_{2}^{(i)}\right)m_{1}(x)+\beta_{3}m_{2}(x)\\
  h_{i}&=&\left(\gamma_{1}^{(i)}l(x)+\gamma_{2}^{(i)}\right)m_{1}(x)+\gamma_{3}m_{2}(x)\\
 \end{array}
\qquad 
\left(x\in \mathbb{F}, i=1, 2, 3\right). 
\]
In our case 
\[
 \begin{array}{rcl}
  g_{1}(x)=h_{1}(x)&=&h(x)\\
  g_{2}(x)&=&x\\
  h_{2}(x)&=&k(x)\\
  g_{3}(x)&=&k(x)\\
  h_{3}(x)&=&x
 \end{array}
\qquad 
\left(x\in \mathbb{F}\right), 
\]
thus either $m_{1}$ or $m_{2}$ is the identity. 
If we would have 
\[
 m_{2}(x)=x 
 \qquad 
 \left(x\in \mathbb{F}\right), 
\]
then after comparing the coefficients 
\[
 \begin{array}{rcl}
  f(x)&=&\alpha_{1}x+\alpha_{2}m(x)\\
  h(x)&=&\beta_{1}x+\beta_{2}m(x)\\
  k(x)&=&\gamma_{1}x+\gamma_{2}m(x)
 \end{array}
\qquad 
\left(x\in \mathbb{F}\right). 
\]
would follow with certain constants. Similarly as above, in this case we would have 
$\dim \mathrm{lin}\left(\mathrm{id}, h, k\right)\leq 2$, contrary to our assumptions. 

The fact that 
\[
 m_{1}(x)=x 
 \qquad 
 \left(x\in \mathbb{F}\right), 
\]
means that there exists a multiplicative function
$m\colon \mathbb{F}\to \mathbb{K}$ and a logarithmic function $l\colon \mathbb{F}^{\times}\to \mathbb{K}$ and constants 
such that 
\[
 \begin{array}{rcl}
  f(x)&=&\left(\alpha_{1}l(x)+\alpha_{2}\right)x+\alpha_{3}m(x)\\[2mm]
  h(x)&=&\left(\beta_{1}l(x)+\beta_{2}\right)x+\beta_{3}m(x)\\[2mm]
  k(x)&=&\left(\gamma_{1}l(x)+\gamma_{2}\right)x+\gamma_{3}m(x)\\
 \end{array}
\qquad 
\left(x\in \mathbb{F}\right). 
\]
Inserting this back into our equation 
the system of equations 
\[
 \begin{array}{rcl}
  \beta_{1}&=&0 \\[2mm]
  \alpha_{1}&=&\gamma_{1}\\[2mm]
  \alpha_{2}&=& \beta_{2}^{2}+2\gamma_{2}\\[2mm]
  \alpha_{3}&=&\beta_{3}^{2}
 \end{array}
\]
follow, that is, 
\[
 \begin{array}{rcl}
  f(x)&=&\left(\gamma_{1}l(x)+\beta_{2}^{2}+2\gamma_{2}\right)x+\beta_{3}^{2}m(x)\\[2mm]
  h(x)&=&\beta_{2}x+\beta_{3}m(x)\\[2mm]
  k(x)&=&\left(\gamma_{1}l(x)+\gamma_{2}\right)x+\gamma_{3}m(x)\\
 \end{array}, 
\]
where additionally $\gamma_{3}+\beta_{2}\beta_{3}=0$ also has to hold. 
To guarantee the system $\left\{\mathrm{id}, h, k\right\}$ to be linearly independent, 
$\beta_{3}\neq 0$ and $\gamma_{1}\neq 0$ also has to be supposed.

The last possibility is that all the involved functions are exponential polynomials of degree two. Since 
\[
 g_{2}(x)=h_{3}(x)= x
 \qquad 
 \left(x\in \mathbb{F}\right), 
\]
the corresponding multiplicative function is the identity, that is, we have 
\[
 \begin{array}{rcl}
  f(x)=\displaystyle \sum_{p, q=1}^{2}\alpha_{p, q}l_{p}(x)l_{q}(x)x+\displaystyle \sum_{p=1}^{2}\alpha_{p}l_{p}(x)x+\alpha x\\[2mm]
  h(x)=\displaystyle \sum_{p, q=1}^{2}\beta_{p, q}l_{p}(x)l_{q}(x)x+\displaystyle \sum_{p=1}^{2}\beta_{p}l_{p}(x)x+\beta x\\[2mm]
  k(x)=\displaystyle \sum_{p, q=1}^{2}\gamma_{p, q}l_{p}(x)l_{q}(x)x+\displaystyle \sum_{p=1}^{2}\gamma_{p}l_{p}(x)x+\gamma x\\
 \end{array}
\qquad 
\left(x\in \mathbb{F}\right)
\]
with certain constants and with linearly independent logarithmic functions $l_{1}, l_{2}\colon \mathbb{F}^{\times}\to\mathbb{K}$. 
Substituting these representations into our equation, firstly 
\[
 \alpha_{p, q}=\beta_{p, q}=\gamma_{p, q}=0 
 \qquad 
 \left(p, q\in \left\{1, 2\right\}\right)
\]
can be concluded, that is, in fact we have that 
\[
 \begin{array}{rcl}
  f(x)=\displaystyle \sum_{p=1}^{2}\alpha_{p}l_{p}(x)x+\alpha x\\[2mm]
  h(x)=\displaystyle \sum_{p=1}^{2}\beta_{p}l_{p}(x)x+\beta x\\[2mm]
  k(x)=\displaystyle \sum_{p=1}^{2}\gamma_{p}l_{p}(x)x+\gamma x\\
 \end{array}
\qquad 
\left(x\in \mathbb{F}\right). 
\]
Again, the comparison of the coefficients leads to 
\[
  \beta_{1}=0\qquad 
  \beta_{2}=0\qquad
  \alpha_{1}=\gamma_{1}\qquad
  \alpha_{2}=\gamma_{2}\qquad
  \alpha=\beta^{2}+2\gamma, 
 \]
that is, there exist linearly independent logarithmic functions $l_{1}, l_{2}\colon \mathbb{F}^{\times}\to \mathbb{K}$ and 
constants $\gamma_{1}, \gamma_{2}, \beta, \gamma \in \mathbb{K}$ 
such that 
\[
 \begin{array}{rcl}
  f(x)&=&\gamma_{1}l_{1}(x)x+\gamma_{2}l_{2}(x)x+ (\beta^{2}+2\gamma) x\\[2mm]
  h(x)&=&\beta x\\[2mm]
  k(x)&=&\gamma_{1}l_{1}(x)x+\gamma_{2}l_{2}(x)x+\gamma x
 \end{array}
 \qquad 
 \left(x\in \mathbb{F}\right). 
\]
Observe that in this case $\mathrm{id}$ and $h$ are linearly dependent, yielding that this possibility cannot occur in our situation. 
Summing up, the following statement holds true.

\begin{thm}\label{Thm5}
Let $f, h, k\colon \mathbb{F}\to \mathbb{K}$ be functions such that 
the system $\left\{\mathrm{id}, h, k\right\}$ is \emph{linearly independent}. 
The functional equation 
 \begin{equation}\label{Eq_gen_indep}
  f(xy)=h(x)h(y)+xk(y)+k(x)y 
 \end{equation}
is fulfilled for any $x, y\in \mathbb{F}$ if and only if 
there exists a multiplicative function
$m\colon \mathbb{F}\to \mathbb{K}$ and a logarithmic function $l\colon \mathbb{F}^{\times}\to \mathbb{K}$ and constants 
$\beta_{2}, \beta_{3}, \gamma_{1}, \gamma_{2}, \gamma_{3}\in \mathbb{K}$
such that 
\[
 \begin{array}{rcl}
  f(x)&=&\left(\gamma_{1}l(x)+\beta_{2}^{2}+2\gamma_{2}\right)x+\beta_{3}^{2}m(x)\\[2mm]
  h(x)&=&\beta_{2}x+\beta_{3}m(x)\\[2mm]
  k(x)&=&\left(\gamma_{1}l(x)+\gamma_{2}\right)x+\gamma_{3}m(x)\\
 \end{array}
 \qquad 
 \left(x\in \mathbb{F}\right), 
\]
where additionally $\gamma_{3}+\beta_{2}\beta_{3}=0$, $\beta_{3}\neq 0$ and $\gamma_{1}\neq 0$ also have to hold.
\end{thm}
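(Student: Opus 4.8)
The plan is to read \eqref{Eq_gen_indep} as a Levi-Civit\`a equation of length three and to import the exponential-polynomial structure theory from the first section. With the identifications $g_1=h_1=h$, $g_2=\mathrm{id}$, $h_2=k$, $g_3=k$ and $h_3=\mathrm{id}$ the equation takes the form $f(xy)=\sum_{i=1}^{3}g_i(x)h_i(y)$. The linear independence of $\{\mathrm{id},h,k\}$ guarantees that both $\{g_1,g_2,g_3\}$ and $\{h_1,h_2,h_3\}$ are linearly independent, so Theorem \ref{Szekely} together with McKiernan's result \cite{McK77} applies (the standing hypotheses that $\mathbb{K}$ be algebraically closed with $\mathrm{char}(\mathbb{K})\neq 2$ are exactly what these tools require) and yields that $f,h,k$ are exponential polynomials spanned by at most three distinct exponentials and of degree at most two. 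I would therefore invoke the description of the three-term Levi-Civit\`a equation on pages 89--92 of \cite{Sze91} as the qualitative backbone; everything after that is the determination of constants.

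The key structural reduction comes from the rigid constraints $g_2=h_3=\mathrm{id}$, $g_1=h_1$ and $h_2=g_3$. Since the identity appears literally among the $g_i$ and $h_i$, one of the participating multiplicative functions must be $\mathrm{id}$ itself, and the equalities $g_1=h_1$, $h_2=g_3$ force the coefficient data in the representations of $h$ and $k$ to coincide in the corresponding slots. With this in hand I would run a trichotomy according to the exponential-polynomial degree: \textbf{(i)} the purely exponential case, where $f,h,k$ are $\mathbb{K}$-linear combinations of three distinct multiplicative functions; \textbf{(ii)} the degree-one case, living in the span of two multiplicative functions and one logarithmic function; and \textbf{(iii)} the degree-two case, with one multiplicative function and two linearly independent logarithmic functions.

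In each branch I would substitute the general representation into \eqref{Eq_gen_indep} and compare coefficients, legitimised by the proposition from the first section asserting that a vanishing combination $\sum p_i m_i$ of distinct exponentials $m_i$ with polynomial coefficients $p_i$ forces every $p_i\equiv 0$. This converts \eqref{Eq_gen_indep} into a finite system of quadratic relations among the constants (of the shape $\alpha_1=\gamma_1$, $\alpha_2=\beta_2^2+2\gamma_2$, $\alpha_3=\beta_3^2$, $\beta_1=0$, $\gamma_3+\beta_2\beta_3=0$). In branches (i) and (iii) these relations collapse the span: in (i) the matrix identity forces $\beta_2^{(1)}\beta_3^{(1)}=0$, so the coefficient of $m_2$ or of $m_3$ vanishes, while in (iii) all degree-two coefficients must vanish and then $h$ reduces to a scalar multiple of $\mathrm{id}$; either way $\dim\mathrm{lin}(\mathrm{id},h,k)\le 2$, contradicting the hypothesis, so both branches are discarded. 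Only branch (ii) survives, and even within it one must rule out the sub-possibility that the degree-zero multiplicative function is the identity (which again yields $\dim\le 2$); a genuinely three-dimensional solution appears only when the identity carries the logarithmic factor, and the coefficient comparison then produces exactly the asserted forms of $f,h,k$ together with $\gamma_3+\beta_2\beta_3=0$, the conditions $\beta_3\neq 0$ and $\gamma_1\neq 0$ being precisely what keeps $\{\mathrm{id},h,k\}$ linearly independent.

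The main obstacle I anticipate is the coefficient bookkeeping in branch (ii): one has to track how $l$, $m$ and $\mathrm{id}$ interact under $f(xy)$, separate the terms carrying $l(x)+l(y)$ (using $l(xy)=l(x)+l(y)$) from the purely multiplicative ones carrying $m(x)m(y)$, and read off every quadratic constraint without loss -- this is where $\beta_1=0$ and $\gamma_3+\beta_2\beta_3=0$ are generated. The converse direction is by contrast routine: inserting the displayed $f,h,k$ into \eqref{Eq_gen_indep} and using $m(xy)=m(x)m(y)$ and $l(xy)=l(x)+l(y)$, the two sides agree exactly when $\gamma_3+\beta_2\beta_3=0$, so the stated relations are both necessary and sufficient.
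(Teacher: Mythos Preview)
Your proposal is correct and follows essentially the same approach as the paper: you cast \eqref{Eq_gen_indep} as a length-three Levi-Civit\`a equation with the identifications $g_1=h_1=h$, $g_2=h_3=\mathrm{id}$, $g_3=h_2=k$, invoke Sz\'ekelyhidi/McKiernan to obtain exponential-polynomial structure, run the same three-branch case split, and eliminate branches (i) and (iii) by showing they force $\dim\mathrm{lin}(\mathrm{id},h,k)\le 2$ (via $\beta_2^{(1)}\beta_3^{(1)}=0$ and $h=\beta\,\mathrm{id}$ respectively), leaving only branch (ii) with the identity carrying the logarithmic factor. The coefficient relations you anticipate ($\beta_1=0$, $\alpha_1=\gamma_1$, $\alpha_2=\beta_2^2+2\gamma_2$, $\alpha_3=\beta_3^2$, $\gamma_3+\beta_2\beta_3=0$) and the nondegeneracy conditions $\beta_3\neq 0$, $\gamma_1\neq 0$ match the paper's derivation exactly.
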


\section{Further interpretations and open questions}

The primary aim of this paper was to study (different) notions of homo-derivations on fields 
(with and  without additivity supposition) as well as the Pexiderized version of this definition.  
At the same time, the results obtained here can be restated with the aid of the notion of alien functional equations. 
This concept was developed by J.~Dhombres in the paper \cite{Dho88}\index{Dhombres, J.}. 
However, the interested reader should also consult the survey paper Ger--Sablik \cite{GerSab17}. 

Let $X$ and $Y$ be nonempty sets and $E_{1}(f)=0$ and $E_{2}(f)=0$ be two functional equations for the function $f\colon X\to Y$. 
We say that equations $E_{1}$ and $E_{2}$ are \emph{alien}, if any solution $f\colon X\to Y$ of the functional equation 
\[
 E_{1}(f)+E_{2}(f)=0
\]
also solves the system 
\[
 \begin{array}{rcl}
  E_{1}(f)&=&0\\
  E_{2}(f)&=&0. 
 \end{array}
\]

Furthermore, equations $E_{1}$ and $E_{2}$ are \emph{strongly alien}, if any pair $f, g\colon X\to Y$ of functions that solves 
\[
 E_{1}(f)+E_{1}(g)=0
\]
also yields a solution for 
\[
  \begin{array}{rcl}
  E_{1}(f)&=&0\\
  E_{2}(g)&=&0. 
 \end{array}
\]

The following statement shows that the multiplicative Cauchy equation and the Leibniz rule equation are \emph{not strongly alien}, 
this is an easy consequence of Theorem \ref{Thm5}. 

\begin{prop}
Let $h, k\colon \mathbb{F}\to \mathbb{K}$ be functions such that the system
$\left\{\mathrm{id}, h, k\right\}$ is \emph{linearly independent}. 
The functional equation 
 \begin{equation}\label{Eq_gen_alien}
  h(xy)+k(xy)=h(x)h(y)+xk(y)+k(x)y 
 \end{equation}
is fulfilled for any $x, y\in \mathbb{F}$ if and only if 
there exists a multiplicative function
$m\colon \mathbb{F}\to \mathbb{K}$ and a logarithmic function $l\colon \mathbb{F}^{\times}\to \mathbb{K}$ and constants 
$\beta, \gamma\in \mathbb{K}$, $\beta \neq 1$, $\gamma\neq 0$
such that 
\[
 \begin{array}{rcl}
  h(x)&=&\beta x+(1-\beta)m(x)\\[2mm]
  k(x)&=&\left(\gamma l(x)+\beta-\beta^{2}\right)x+(\beta^{2}-\beta)m(x)\\
 \end{array}
 \qquad 
 \left(x\in \mathbb{F}\right).  
\]
\end{prop}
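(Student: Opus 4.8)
The plan is to recognize that \eqref{Eq_gen_alien} is nothing but the special case of \eqref{Eq_gen_indep} obtained by setting $f:=h+k$. Indeed, the left-hand side $h(xy)+k(xy)$ equals $(h+k)(xy)$, so a pair $(h,k)$ solves \eqref{Eq_gen_alien} precisely when the triple $(h+k,h,k)$ solves \eqref{Eq_gen_indep}. Since the linear independence hypothesis on $\left\{\mathrm{id},h,k\right\}$ is identical in both statements, Theorem \ref{Thm5} applies verbatim and supplies the representation of $h$, $k$ and $f=h+k$ in terms of a multiplicative function $m$, a logarithmic function $l$ and the constants $\beta_{2},\beta_{3},\gamma_{1},\gamma_{2},\gamma_{3}$, subject to $\gamma_{3}+\beta_{2}\beta_{3}=0$, $\beta_{3}\neq 0$ and $\gamma_{1}\neq 0$.

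The only remaining work is to translate the constraint $f=h+k$ into relations among these constants. First I would write down the three expressions furnished by Theorem \ref{Thm5} and impose $f=h+k$, comparing coefficients of the building blocks $l(x)x$, $x$ and $m(x)$, whose linear independence is guaranteed in the non-degenerate regime (in particular $m\neq\mathrm{id}$, since otherwise $h$ would be a multiple of $\mathrm{id}$). The $l(x)x$-coefficients agree automatically; the $x$-coefficients give $\beta_{2}^{2}+2\gamma_{2}=\beta_{2}+\gamma_{2}$, that is $\gamma_{2}=\beta_{2}-\beta_{2}^{2}$; and the $m(x)$-coefficients give $\beta_{3}^{2}=\beta_{3}+\gamma_{3}$. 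Combining the last equation with the standing relation $\gamma_{3}=-\beta_{2}\beta_{3}$ yields $\beta_{3}^{2}=\beta_{3}(1-\beta_{2})$, whence $\beta_{3}=1-\beta_{2}$ after dividing by $\beta_{3}\neq 0$.

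Writing $\beta:=\beta_{2}$ and $\gamma:=\gamma_{1}$, these identities pin down the remaining constants as $\beta_{3}=1-\beta$, $\gamma_{2}=\beta-\beta^{2}$ and $\gamma_{3}=-\beta(1-\beta)=\beta^{2}-\beta$. Substituting back into the formulas of Theorem \ref{Thm5} reproduces exactly the claimed shapes
\[
 h(x)=\beta x+(1-\beta)m(x),\qquad k(x)=\bigl(\gamma l(x)+\beta-\beta^{2}\bigr)x+(\beta^{2}-\beta)m(x),
\]
while the inequalities $\beta_{3}\neq 0$ and $\gamma_{1}\neq 0$ become precisely $\beta\neq 1$ and $\gamma\neq 0$. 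The converse direction needs no separate computation: since Theorem \ref{Thm5} is an equivalence and the above is merely a reparametrization of its constants, any $h,k$ of the stated form produce a triple $(h+k,h,k)$ solving \eqref{Eq_gen_indep}, hence $(h,k)$ solves \eqref{Eq_gen_alien}.

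I do not expect a genuine obstacle here, as all the analytic difficulty has been absorbed into Theorem \ref{Thm5} and what remains is a short linear bookkeeping of constants. The single point demanding a little care is justifying the term-by-term comparison of coefficients, which rests on the linear independence of $\mathrm{id}$, $l\cdot\mathrm{id}$ and $m$ in the non-degenerate case; this is already implicit in the derivation of Theorem \ref{Thm5} and may be invoked directly.
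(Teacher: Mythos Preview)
Your proposal is correct and is exactly the route the paper has in mind: the proposition is stated there as ``an easy consequence of Theorem \ref{Thm5}'' with no further proof, and what you have written is precisely the bookkeeping that spells this out---recognizing \eqref{Eq_gen_alien} as \eqref{Eq_gen_indep} with $f=h+k$, then comparing coefficients to reduce the five constants of Theorem \ref{Thm5} to the two parameters $\beta,\gamma$.
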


As the proposition below shows, the multiplicative Cauchy equation and the Leibniz rule equation 
are alien, cf. Proposition \ref{Prop1} and take $\lambda=\mu$ in the corollary below. 

\begin{cor}
 Let $\lambda, \mu\in \mathbb{K}$ be arbitrarily fixed constants not vanishing simultaneously. 
 Function $f\colon \mathbb{F}\to\mathbb{K}$ fulfills the functional equation 
 \begin{equation}\label{Eq8}
  \lambda \left[f(xy)-f(x)y-xf(y)\right]+\mu\left[f(xy)-f(x)f(y)\right]=0
  \qquad 
  \left(x, y\in \mathbb{F}\right)
 \end{equation}
if and only if 
\begin{enumerate}[(A)]
 \item either $\lambda=0$ and $f$ is multiplicative;
 \item or $\mu=0$ and $f$ is a Leibniz function;
 \item or none of them is zero and 
 \begin{enumerate}
  \item $f$ is identically zero 
  \item or 
 \[
  f(x)= \dfrac{\mu-\lambda}{\mu}\cdot x 
  \qquad 
  \left(x\in \mathbb{F}\right). 
 \]
 \end{enumerate}
\end{enumerate}
\end{cor}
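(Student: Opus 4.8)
The plan is to handle the three alternatives by direct specialization, avoiding the heavier machinery of Theorem~\ref{Thm5}. First I would dispose of the two degenerate cases. If $\lambda=0$, then since $\lambda$ and $\mu$ do not vanish simultaneously we have $\mu\neq 0$, so \eqref{Eq8} collapses to $\mu[f(xy)-f(x)f(y)]=0$, i.e. $f$ is multiplicative; this is alternative (A). Symmetrically, if $\mu=0$ then $\lambda\neq 0$ and \eqref{Eq8} reduces to the Leibniz equation, giving alternative (B).

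The substance is the case $\lambda\neq 0$ and $\mu\neq 0$. Here I would first rewrite \eqref{Eq8} in the collected form
\[
 (\lambda+\mu)f(xy)=\lambda\left[f(x)y+xf(y)\right]+\mu f(x)f(y)
 \qquad
 \left(x, y\in \mathbb{F}\right).
\]
Substituting $x=y=1$ yields $(\lambda+\mu)f(1)=2\lambda f(1)+\mu f(1)^2$, which rearranges to
\[
 f(1)\left[\mu f(1)-(\mu-\lambda)\right]=0,
\]
so that either $f(1)=0$ or $f(1)=\dfrac{\mu-\lambda}{\mu}$. I would note that this step, and all that follow, do not require $\lambda+\mu\neq 0$, so the borderline subcase $\lambda=-\mu$ causes no trouble.

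The key observation is that putting $y=1$ for a general $x$ linearizes the problem, since $f(1)$ is now a known constant:
\[
 \mu\left(1-f(1)\right)f(x)=\lambda f(1)\,x
 \qquad
 \left(x\in \mathbb{F}\right).
\]
If $f(1)=0$, this gives $\mu f(x)=0$, hence $f\equiv 0$ because $\mu\neq 0$; this is the first possibility in (C). If instead $f(1)=\dfrac{\mu-\lambda}{\mu}$, then $1-f(1)=\dfrac{\lambda}{\mu}$, and the displayed identity becomes $\lambda f(x)=\dfrac{\lambda(\mu-\lambda)}{\mu}\,x$; dividing by $\lambda\neq 0$ produces $f(x)=\dfrac{\mu-\lambda}{\mu}\,x$, which is the second possibility in (C).

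I expect no genuine obstacle here: the argument is a short sequence of specializations, and the only care needed is the routine algebra in the two subcases. To complete the equivalence I would carry out the immediate converse check, verifying that $f\equiv 0$ and $f(x)=\dfrac{\mu-\lambda}{\mu}x$ indeed satisfy \eqref{Eq8}; for the latter this reduces to the scalar identity $-\lambda c+\mu c(1-c)=0$ with $c=\dfrac{\mu-\lambda}{\mu}$, which holds precisely because $\mu(1-c)=\lambda$ by the very choice of $c$.
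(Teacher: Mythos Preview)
Your argument is correct. The paper does not give an explicit proof of this corollary; it merely points the reader to Proposition~\ref{Prop1} as a model, and your method---specializing $y=1$ to linearize the relation and then splitting on the value of $f(1)$---is precisely in that spirit. The verification that the borderline case $\lambda+\mu=0$ is harmless and the converse check are both handled cleanly.
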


\begin{rem}
 Under the assumptions of the previous corollary, the additive solutions of equation \eqref{Eq8} are the following. 
 \begin{enumerate}[(A)]
 \item either $\lambda=0$ and $f$ is a homomorphism;
 \item or $\mu=0$ and $f$ is a derivation;
 \item or none of them is zero and 
 \begin{enumerate}
  \item $f$ is identically zero 
  \item or 
 \[
  f(x)= \dfrac{\mu-\lambda}{\mu}\cdot x 
  \qquad 
  \left(x\in \mathbb{F}\right). 
 \]
 \end{enumerate}
\end{enumerate}
\end{rem}

\begin{opp}
In this paper equation \eqref{Eq_homo-deriv} was considered  under rather mild assumptions on the domain and also on the range. 
At the same time, while investigating functional equation \eqref{Eq_gen} we always assumed that the range 
 of the involved mappings is a commutative, algebraically closed field $\mathbb{K}$ with $\mathrm{char}(\mathbb{K})\neq 2$. 
 The reason for this is that our main tools were Theorem \ref{Szekely} and the related results of McKiernan \cite{McK77}. 
 Clearly, the above equations can be studied without these assumptions. 
 We remark that in case of the so-called \emph{degenerate cases} a careful adaptation of the proofs shows that the same results hold true for commutative rings
  (at some places we have to assume that the range does not have any zero-divisors). 
 Therefore we can formulate the following open questions. 
 \begin{enumerate}[(A)]
  \item The algebraic closedness of the field $\mathbb{K}$ essential in our results. Nevertheless, if the field $\mathbb{K}$ is not algebraically closed, then we may
  take $\mathrm{algcl}(\mathbb{K})$ as the extended range. Using our method, $\mathrm{algcl}(\mathbb{K})$-valued solutions can be described and 
  every $\mathbb{K}$-valued solution belongs to the above larger solution space. How can these solutions be recognized in the larger solution space?   
  \item To apply the results of Székelyhidi \cite{Sze91} and McKiernan \cite{McK77}, the assumption 
  $\mathbb{F}\subset \mathbb{K}$ be a field is sufficient, but may not be necessary. 
  Observe that we only need $\mathbb{F}$ to be a (commutative) subring of $\mathbb{K}$. In case the range 
  $Q$ is only a (commutative) ring, then what else should be supposed about $Q$, to get the same results? \\
  It is worth to note that if $Q$ has no zero divisors, then up to isomorphism there exists a unique field of fractions that we may denote by 
  $\mathbb{K}$. In this case $\mathbb{K}$-valued functions can be considered and if it is needed we should take the algebraic closure 
  of this field (c.f. part (A)). 
  \item In case $P\subset Q$ are (commutative) rings and we consider mappings from $P$ to $Q$, then are there different solutions of \eqref{Eq_gen} then presented here? 
 \end{enumerate}
\end{opp}
\vspace{0.5cm}

\noindent
\emph{Acknowledgement.}
Gergely Kiss was supported by the Hungarian National Foundation for
Scientific Research, Grant No. K124749 and the Premium Postdoctoral
Fellowship of Hungarian Academy of Science.
The research of Eszter Gselmann has been carried out with the help of the project 2019-2.1.11-T\'{E}T-2019-00049,
which has been implemented with the support provided from the National Research, Development
and Innovation Fund of Hungary, financed under the T\'{E}T funding scheme.


\end{document}